\numberwithin{equation}{section}
\def\varequals#1{\@Arrow@type\@Linecap\@Linecap{#1}{1}}
\theoremstyle{plain}
\newtheorem{thm}{Theorem}[section]
\newtheorem{cor}[thm]{Corollary}
\newtheorem{lmm}[thm]{Lemma}
\newtheorem{prp}[thm]{Proposition}
\newtheorem{que*}{Question}
\newtheorem{property}[thm]{Property}
\newtheorem*{thm*}{Theorem}
\newtheorem{mainthm}{Theorem}
\theoremstyle{definition}
\newtheorem{rem}[thm]{Remark} 
\newtheorem{defn}[thm]{Definition}
\newcommand\reallywidehat[1]{%
\savestack{\tmpbox}{\stretchto{%
\scaleto{%
    \scalerel*[\widthof{\ensuremath{#1}}]{\kern-.6pt\bigwedge\kern-.6pt}%
    {\rule[-\textheight/2]{1ex}{\textheight}}
  }{\textheight}%
}{0.5ex}}%
\stackon[1pt]{#1}{\tmpbox}%
}
\newcommand\groupequation[2][21pt]{%
  \setbox0=\hbox{$\displaystyle#2$}%
  \stackengine{0pt}{\copy0}{%
    \makebox[\linewidth]{\hfill$\left.\rule{0pt}{\ht0}\right\}$\kern#1}}
    {O}{c}{F}{T}{L}
}
\def \PP{\mathbb{P}}
\def \ZZ{\mathbb{Z}}
\def \ev{\mathrm{ev}}
\begin{document}
\title[Gromov-Witten invariants in family and Quantum cohomology]{Gromov-Witten invariants in family and Quantum cohomology}

\author[I. Biswas]{Indranil Biswas}

\address{Mathematics Department, Shiv Nadar University, NH91, Tehsil Dadri, Greater Noida, Uttar Pradesh 
201314, India}

\email{indranil.biswas@snu.edu.in, indranil29@gmail.com}

\author[N. Das]{Nilkantha Das}

\address{Stat-Math Unit, Indian Statistical Institute, 203 B.T. Road, Kolkata 700 108, India}

\email{dasnilkantha17@gmail.com}

\author[J. Oh]{Jeongseok Oh}
\address{Department of Mathematical Sciences and Research Institute of Mathematics, 
Seoul National University,
Seoul 08826,
Korea.}
\email{jeongseok@snu.ac.kr}

\author[A. Paul]{Anantadulal Paul}
\address{Survey No. 151, Shivakote, Hesaraghatta, Uttarahalli Hobli, Bengaluru, 560089, India.}
\email{anantadulal.paul@icts.res.in}

\subjclass[2020]{14N35}
\keywords{Moduli space, Stable maps, Gromov-Witten invariants, Virtual fundamental class, Quantum cohomology}
\maketitle

\begin{abstract}
A moduli space of stable maps to the fibers of a fiber bundle is constructed.
The new moduli space is a family version of the classical moduli space
of stable maps to a non-singular complex projective variety. The virtual cycle for this moduli space is also constructed, and an analogue of Gromov-Witten invariants is defined. As an application, we recover the formula for the number
of rational degree $d$ curves in $\mathbb{P}^3$, whose image lies in a plane in $\mathbb{P}^3$ (known as
planar curves in $\mathbb{P}^3$), intersecting $r$ general lines while passing through given
$s$ general points, where $r + 2s = 3d + 2$, firstly proved by R. Mukherjee, R.
Kumar Singh and the fourth named author.
\end{abstract}

\section{Introduction}
\subsection*{Enumerative problem via Gromov-Witten theory}

Let $X$ be an irreducible smooth complex projective variety. The Gromov-Witten (GW for short) invariants of $X$ 
naively count curves in $X$ of given degree and genus. These counts are rational numbers and are defined by the 
intersection theory on $\overline{\mathcal{M}}_{g,n}(X, \beta)$, the moduli spaces of stable maps representing 
a class $\beta \,\in\, H_2(X,\, \ZZ)$ (cf. \cite{FuPa,K.M}). Here $g$ denotes the arithmetic genus of curves. The 
moduli space $\overline{\mathcal{M}}_{g,n}(X,\beta)$ may neither be smooth nor irreducible, and hence it does not 
possess a fundamental class (of homogeneous degree). Behrend, Behrend-Fantechi and Li-Tian (see 
\cite{Behrend,B.F, L.Ti2}), have however defined the virtual fundamental class
\begin{equation*}
 \left[ \overline{\mathcal{M}}_{g, n}(X, \beta) \right] ^{\mathrm{vir}} \,\in\,
H_{2 \mathrm{vd}}(\overline{\mathcal{M}}_{g,n}(X,\beta),\, \mathbb{Q}), 
\end{equation*}
where $\mathrm{vd} \,:=\, c_1(T_X)\cdot \beta + (\dim X -3)(1-g) + n$.
Given cycles $\gamma_1,\, \ldots, \,\gamma_n \,\in\, H^{\ast}(X,\, \ZZ)$, the associated primary Gromov-Witten invariant is defined as follows:
\begin{align*}
\langle \gamma_1,\, \ldots,\, \gamma_n\rangle_{g,n, \beta}\ =\ \int_{\left[ \overline{\mathcal{M}}_{g,n}(X, \beta) \right]^{\mathrm{vir}} } \ev_1^{\ast}(\gamma_1) \cdots \ev_n^{\ast}(\gamma_n).
\end{align*}
This theory is developed through both algebro-geometric approach (cf. \cite{Behrend,B.F,B.M,FuPa,K.M})
and symplecto-geometric approach (cf. \cite{Ionel-symp-sum,L.Ti2,McSa,RT}).

In some appropriate cases, GW-invariants coincide with the classical count of curves, that is the GW-invariants
are enumerative (for instance, see \cite{RVelliptic}). Over the last few decades, GW-theory
attained great success on enumeration of curves on projective varieties. Particularly in low genera, this
theory plays a crucial role amongst other curve counting theories available in literature. For example, when
$X\,=\, \mathbb{P}^2$, the WDVV equation (which is equivalent to the associativity of the quantum cohomology
of $X$) produces the famous recursive formula of Kontsevich-Manin counting the rational curves on $\mathbb{P}^2$
of a given degree. More generally the following interesting question may be asked:

\begin{que*} \label{genus-q-count-in-plane}
How many degree $d$ curves of genus $g$ are there in $\PP^2$ passing through $3d+g-1$
points in general position?
\end{que*}

It essentially amounts to computing the Gromov-Witten invariant of $\PP^2$ of degree $d$ and genus $g$ (cf. \cite{RV}). 

\subsection*{Answer in classical algebraic geometry} Consider the following classical question: 

\begin{que*}\label{nodal_question}
What is $N_d^{\delta}$, the number of degree $d$ curves in $\mathbb{P}^2$ that have $\delta$ distinct nodes and pass through ${\textstyle\frac{d(d+3)}{2}} - \delta$ generic points?
\end{que*}

Question \ref{genus-q-count-in-plane} and Question \ref{nodal_question} both are linked through the degree-genus formula $g
\,=\,\frac{(d-1)(d-2)}{2}$, or equivalently $3d+g-1\,=\,\frac{d(d+3)}{2}$, for a genus $g$ plane curve. Question \ref{nodal_question} was studied extensively more than a hundred years ago from several perspectives by several mathematicians. A complete solution to this question was given by Caporaso-Harris \cite{CH}. 

\subsection*{Generalization to a family}

In a series of papers \cite{KP3,KP4,KP2}, Kleiman and Piene studied a natural generalization of the above question --- that of counting nodal curves in a moving family of surfaces.
Considering a family of surfaces $\pi\,:\, E\, 
\longrightarrow \,B$ over a $\mathbb{C}$-scheme $B$, they counted $\delta$-nodal curves lying in fibers,
for $\delta \,\leq\, 8$, and conjectured for the general nodal case. In \cite[Theorem A]{TL}, Laarakker made some 
partial progress on that conjecture. As a special case, one can consider the planar curves in 
$\mathbb{P}^3$. A curve in $\mathbb{P}^3$ is said to be \textit{planar} if it lies inside some $\mathbb{P}^2 
\,\subset\, \mathbb{P}^3$. Note that $\mathbb{G}\,:=\,G(3,4)$, the Grassmannian of $3$-planes in 
$\mathbb{C}^4$, parametrizes the space of all planes in $\mathbb{P}^3$. The projectivization of the 
tautological vector bundle $E\,:=\, \mathbb{P}(\gamma) \,\xrightarrow{\,\,\,\pi\,\,\,}\, B\,:=\,\mathbb{G}$ is then the 
$\mathbb{P}^2$-bundle that encodes all the information of all the planes in $\mathbb{P}^3$. A planar curve in 
$\mathbb{P}^3$ is a curve in $E_b\,:= \,\pi^{-1}(b)$ for some point $b$ of $B$. As an application of his theorem, 
Laarakker, \cite[Theorem B]{TL}, computed the number of $\delta$-nodal degree $d$ planar curves in 
$\mathbb{P}^3$ meeting appropriate number of generic lines, for all $\delta$. This can be viewed as a fiber 
bundle version of the plane curve counting. Furthermore, this question has been studied for more degenerate 
singularities in \cite{Das-Mukherjee}.

As for how Question \ref{genus-q-count-in-plane} arises, in terms of Gromov-Witten invariants, from Question 
\ref{nodal_question}, it is natural to ask the analogue of \Cref{genus-q-count-in-plane} in the planar version 
setting. Using stable map approach Mukherjee-Paul-Singh \cite{BMS} found a solution of the following 
question.

\begin{que*}\label{Q3}
What is $N_d(r,s)$, the number of rational degree $d$ planar curves in $\mathbb{P}^3$ that intersect $r$
general lines and also pass through $s$ points in $\mathbb{P}^3$ satisfying the condition $r+2s \,=\, 3d+2$?
\end{que*}

We will describe their precise answer to the question in the next section after introducing moduli spaces. Their
strategy was to use Kock-Vainsencher's approach (cf. \cite{KV0}) after assuming the smoothness of genus $0$
moduli spaces. In this paper this question will be answered in a different way:

\begin{itemize}
\begin{samepage}
\item after developing a concrete curve counting theory, and
\item using WDVV equation.
\end{samepage}
\end{itemize}

There is another motivation to develop the curve counting theory.
Building up on the results of Mukherjee-Paul-Singh \cite{BMS}, Mukherjee-Singh, in \cite{Rahul_Rit}, obtained
a formula for the characteristic number of planar rational degree $d$ curves in $\mathbb{P}^3$ having
a cusp. This naturally requires to study the moduli space of stable maps in a moving family of targets, as
well as its Gromov-Witten analogue.

\subsection*{Gromov-Witten invariants for a family} 

Let us now take a closer look at the stable map version of planar curves. We fix the notation $\pi\,:\,E\,
\longrightarrow\, B$ for 
the family of planes $\mathbb{P}(\gamma) \,\longrightarrow\, \mathbb{G}$. A \textit{planar $n$-pointed stable
map} is a stable 
map $\mu\,:\, (C,\,p_1,\, \ldots ,\,p_n) \,\longrightarrow\, E$ such that $\pi \circ \mu$ is a constant
map. In other words, if $\pi 
\circ \mu\,= \,c$, then $\mu$ lands inside $E_c\,:=\, \pi^{-1}(c) \,\cong\, \mathbb{P}^2$.

Let us now try to implement the above idea on a general fiber bundle. Let $B$ be a non-singular variety, $F$
a non-singular projective variety, and $E\, \xrightarrow{\,\,\,\pi\,\,\,}\, B$ a $F$-fiber bundle. Given $g,\,n
\,\geq\, 0$ and $\beta \,\in\, H_2(F,\,\mathbb{Z})$, we want to define a moduli space parametrizing
stable maps $\mu\,:\, (C,\,p_1,\, \ldots ,\,p_n)\, \longrightarrow\, E$ such that $\pi \circ \mu$ is
a constant map and $\mu_*\left( [C]\right)\,=\,\beta$ after identifying $F$ with the fiber into which
$\mu$ lands. Then a new moduli space can be defined using this data. Let us denote the moduli
space by $\overline{\mathcal{M}}(g,n,\beta)$ (which depends on $g$, $n$ and $\beta$). This moduli space, if
it exists, is equipped with a map $\overline{\mathcal{M}}(g,n,\beta) \,\longrightarrow\, B$ whose fibers
are isomorphic to the moduli space $\overline{\mathcal{M}}_{g,n}(F, \beta)$. It is very desirable
for $\overline{\mathcal{M}}(g,n,\beta)$ to be a $\overline{\mathcal{M}}_{g,n}(F, \beta)$-fiber bundle. For
such a moduli space to exist, we need to put some restrictions on the fiber bundle $E\, \longrightarrow\, B$.
Consider the following subgroup of $\mathrm{Aut}(F)$:
\begin{equation}\label{the structure group_1}
G \,:=\, \{ \phi \,\in\, \mathrm{Aut}(F) \,\big\vert\,\, \phi_*\, \in\, \text{Aut}(H_2(F,\, \mathbb{Z}))\,\,
\text{ is } \,\, {\rm Id}_{H_2(F,~ \mathbb{Z})}\}.
\end{equation}	
If we assume $E$ to be a $F$-fiber bundle over $B$ with the above structure group $G$, the existence of 
$\overline{\mathcal{M}}(g,n,\beta)$ with desired properties can be ensured. The technical assumption on the 
structure group is discussed in more detail in \Cref{section family version of moduli spaces}.

\begin{mainthm}
Let $B$ be a non-singular variety, $F$ a non-singular projective variety, and $E \,\xrightarrow{\,\,\,\pi\,\,\,}\,
B$ a $F$-fiber bundle with structure group $G$ as defined in \cref{the structure group_1}. Given $g,\,n \,
\geq \,0$ and an effective homology class $\beta \,\in\, H_2(F,\,\mathbb{Z})$, the moduli space
$\overline{\mathcal{M}}(g,n,\beta) \,\longrightarrow\, B$ discussed above exists and it is
a $\overline{\mathcal{M}}_{g,n}(F, \beta)$-fiber bundle. Hence, in particular, it is a proper Deligne-Mumford
stack if $B$ is proper. Furthermore, it comes equipped with a natural virtual fundamental class
$[\overline{\mathcal{M}}(g,n,\beta)]^{\textnormal{vir}}$ of virtual dimension 
\[
c(T_F).\beta+(\dim F-3)(1-g)+n+\dim B,
\]
which is the same as the virtual dimension of $\overline{\mathcal{M}}_{g,n}(F, \beta)$ plus the dimension
of $B$.
\end{mainthm}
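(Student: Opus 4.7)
The plan is to construct $\overline{\mathcal{M}}(g,n,\beta)$ as a fiber bundle associated to the same principal $G$-bundle that produces $E \to B$. The starting observation is that the defining condition on $G$ in \cref{the structure group_1} says precisely that every $\phi \in G$ fixes the class $\beta$; consequently each $\phi$ induces, by post-composition with the target, an automorphism of the moduli stack $\overline{\mathcal{M}}_{g,n}(F,\beta)$. Writing $E = P \times^{G} F$ for the associated fiber bundle of a principal $G$-bundle $P \to B$, I would set
\[
\overline{\mathcal{M}}(g,n,\beta) \;:=\; P \times^{G} \overline{\mathcal{M}}_{g,n}(F,\beta),
\]
with the obvious projection to $B$. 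By construction, this is an \'etale-locally trivial fiber bundle over $B$ with fiber $\overline{\mathcal{M}}_{g,n}(F,\beta)$, so both the Deligne-Mumford property and properness (when $B$ is proper) are inherited directly from the fiber.

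Next I would check that this object really represents the moduli problem described above. On any trivializing open $U \subset B$, a $U$-point of $\overline{\mathcal{M}}(g,n,\beta)|_U \cong U \times \overline{\mathcal{M}}_{g,n}(F,\beta)$ is a pair $(b,\,[\mu : (C,p_\bullet) \to F])$, which via $E|_U \cong U \times F$ corresponds to a stable map from $(C,p_\bullet)$ into the fiber $E_b$ representing $\beta$. Compatibility across different trivializations is exactly the $G$-equivariance noted above; hence the universal family and the moduli interpretation glue globally, and one obtains a morphism $\overline{\mathcal{M}}(g,n,\beta) \to B$ whose fibers are canonically $\overline{\mathcal{M}}_{g,n}(E_b,\beta)$.

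For the virtual fundamental class, my plan is to descend the canonical perfect obstruction theory of $\overline{\mathcal{M}}_{g,n}(F,\beta)$ to a \emph{relative} perfect obstruction theory for $\overline{\mathcal{M}}(g,n,\beta) \to B$. Because every $\phi \in G$ acts on the target $F$ and the Behrend-Fantechi obstruction theory is natural in target automorphisms, the standard complex $(R\pi_{\ast}\mu^{\ast}T_F)^{\vee}$ together with its map to the cotangent complex carries a canonical $G$-equivariant structure, which therefore descends along the associated-bundle construction. Since $B$ is smooth, this relative obstruction theory combines with the smooth projection to $B$ to give an absolute perfect obstruction theory in the sense of \cite{B.F}, yielding $[\overline{\mathcal{M}}(g,n,\beta)]^{\mathrm{vir}}$. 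The virtual dimension is additive: the relative part contributes the usual $c_1(T_F)\cdot\beta + (\dim F - 3)(1-g) + n$, and $B$ contributes $\dim B$.

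The main obstacle, to my mind, is verifying that the $G$-action genuinely lifts to the full perfect obstruction theory and not merely to the underlying stack. Concretely, one must exhibit a $G$-equivariant refinement of the universal curve $\mathcal{C} \to \overline{\mathcal{M}}_{g,n}(F,\beta)$ together with its evaluation to $F$, and then check that the Behrend-Fantechi construction is strictly functorial on this $2$-categorical data; once this bookkeeping is in place, smoothness of $B$ is exactly what upgrades the relative obstruction theory to an absolute one without further hypotheses.
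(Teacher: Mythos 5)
Your construction of the moduli space is essentially the same as the paper's: the paper also forms the principal $G$-bundle $E_G\to B$ associated to $\pi$ and defines $\overline{\mathcal{M}}$ as the associated bundle via the induced homomorphism $G\to\mathrm{Aut}(\overline{\mathcal{M}}_{g,n}(F,\beta))$, then exhibits the isomorphism with the moduli functor $\overline{\mathcal{M}}^{\mathrm{Fib}}_{g,n}(E/B,\widetilde\beta)$ by the same local gluing you describe. Where you diverge from the paper is in how the obstruction theory is produced, and the difference is worth noting. You propose equipping the Behrend--Fantechi complex $(R\pi_*\mu^*T_F)^\vee$ on $\overline{\mathcal{M}}_{g,n}(F,\beta)$ with a $G$-equivariant structure and descending it along the associated-bundle construction; as you yourself flag, making this rigorous requires tracking $2$-categorical data (the universal curve, evaluation map, and the morphism to the cotangent complex must all be equivariantly compatible), which is a nontrivial bookkeeping burden. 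The paper avoids that issue entirely: it constructs the relative obstruction theory directly on $\overline{\mathcal{M}}(E/B)$ from the global universal fibered stable map $f$ into $E$, using the \emph{relative} tangent complex $T_{E/B}$ in place of $T_F$, so that $\phi\colon (R\varpi_*f^*T_{E/B})^\vee\to\mathbb{L}_{\overline{\mathcal{M}}(E/B)/\mathfrak{m}_{g,n}\times B}$ already exists globally by the functoriality of cotangent complexes; the three conditions defining a perfect obstruction theory are then verified by restricting to a trivializing open $U\subset B$, where $\phi$ becomes the flat base change of the classical $\phi_F$. The absolute obstruction theory is then obtained exactly as you say, by coning with $\mathbb{L}_{\mathfrak{m}_{g,n}\times B}$ and using smoothness of $\mathfrak{m}_{g,n}\times B$. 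So your route is viable, but the paper's ``build globally, verify locally'' strategy replaces the equivariance/descent step with the observation that the defining conditions of a POT are local, which makes the argument cleaner and sidesteps precisely the obstacle you identify.
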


It maybe worthwhile to mention that $\overline{\mathcal{M}}_{g,n}(F, \beta)$ is a singular DM stack. Our approach of constructing $\overline{\mathcal{M}}(g,n,\beta)$ is a singular version of smooth fiber bundle constructed by using the automorphisms of (singular) DM stacks.

There are natural evaluation maps $\mathrm{ev}_i\,:\, \overline{\mathcal{M}}(g,n,\beta)\,\longrightarrow\, E$ given 
by evaluation on the $i^{\textnormal{th}}$ marked point so that we can define
the numerical invariants as usual. Given 
cohomology cycles on $E$, we can pull them back via the maps $\mathrm{ev}_i$ and integrate over 
$[\overline{\mathcal{M}}(g,n,\beta)]^{\textnormal{vir}}$. These are analogues of Gromov-Witten invariants.

Let us now return to the planar version set-up and formulate \Cref{Q3} in terms of the above numerical invariants. 
Recall that $\mathbb{P}(\gamma)\,\longrightarrow\, \mathbb{G}$ is the fiber bundle in this setting. The fiber bundle 
$\mathbb{P}(\gamma)$, being a sub-bundle of $\mathbb{G}\times \mathbb{P}^3$, induces a natural projection map 
$\mathbb{P}(\gamma) \,\longrightarrow\, \mathbb{P}^3$. Let $a\,\in\, H^2(\mathbb{G})$ and $H\,\in\, 
H^2(\mathbb{P}(\gamma))$ denote respectively the hyperplane classes of $\mathbb{G}$ and the pullback of the hyperplane 
classes of $\mathbb{P}^3$; the lines and points are considered to be the Poincar\'e duals of $H^2$ and $H^3$, 
respectively. We now define
\begin{align}\label{ndrst}
N_d(r,s,\theta)\,:=\,\int_{[\overline{\mathcal{M}}(0,r+s,d[\textnormal{line}])]^{\textnormal{vir}}}
\mathrm{ev}_1^*(a^{\theta}H^2)\cdot \left(\prod_{i=2}^r \ev_i^*(H^2)\right) \cdot
\left( \prod_{j=r+1}^{r+s} \ev_j^*(H^3)\right).
\end{align}
The parameter $\theta$ encodes the codimension of the cycle $a^{\theta}$.
Observe that $N_d(r,s,0)$ gives the virtual count for \Cref{Q3}, whereas $N_d(r,s,3)$ gives that of \Cref{genus-q-count-in-plane} with $g=0$. In fact, we will show that $N_d(r,s,\theta)$ is enumerative in \Cref{enumerative significance}.
 
Note that 
\begin{align*}
		N_d(r,s,\theta) &=  \begin{cases} 0 & \mbox{if} ~~r + 2s + \theta \neq 3d+2, \\ 
		0 & \mbox{if }  ~~s>3, \mbox{ or } \theta >3.\\ 
\end{cases} 
		\end{align*}
The first vanishing follows from the fact that the virtual dimension is not the same as the codimension of the cycle. Since $a^{\theta}=0$ for $\theta \ > \ 3$, $N_d(r,s,\theta)$ vanishes in this case.  Geometrically, for any planar curve passing through $s$ many generic points,
we have that the plane on which the curve lies also passes through $s$ many generic points. But there is no plane in $\mathbb{P}^3$ passing through 
more than three generic points. So, $N_{d}(r,s,\theta)$ vanishes as well.

Then we reprove the following recursion formula, first proved in \cite[Theorem 3.3]{BMS}, which answers 
\Cref{Q3} by setting $N_d(r,s)\,=\,N_d(r,s,0)$.

\begin{mainthm}\label{thm_B}
For $d \,\geq\, 2$ and $r\,\geq\, 3$, the following recursion relation holds:
\begin{align*}
&N_d(r,s,\theta)\,=\,\,  2dN_d(r-1,s,\theta+1)-2d^2N_d(r-2,s,\theta+2) \\
&+ \sum_{\substack{d_1,d_2 > 0\\ d_1+d_2=d}}\sum_{\substack{r_1+r_2=r-1\\ s_1+s_2=s \\ \theta_1+\theta_2=\theta+3}} \left( d_1^2d_2^2\binom{r-3}{r_1-1}- d_1^3d_2{r-3\choose r_1}\right) \binom{s}{s_1} N_{d_1}(r_1,s_1,\theta_1)N_{d_2}(r_2,s_2,\theta_2)\nonumber
\end{align*}
with the convention that ${a \choose b}\,=\,0$ if $b\,<\,0$ or $b\,>\,a$ while the initial condition is
given by 
\begin{align*}
		N_d(r,s,\theta) & \,= \begin{cases}
		1 &  \mbox{if} ~~(d,r,s,\theta) =(1,0,2,1),(1,2,1,1), (1,1,1,2),(1,2,0,3),(2,2,3,0)\\ 
		0 & \mbox{\textnormal{otherwise with $d = 1$ and $r \geq 3$}}\\
		0 & \mbox{\textnormal{otherwise with $d\geq 1$ and $r\leq 2$}}.\end{cases} 
\end{align*}
\end{mainthm}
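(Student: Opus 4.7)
The plan is to derive the recursion via a WDVV-type cross-ratio argument on the family moduli space of Theorem~A, along the lines of Kontsevich's derivation of his rational plane curve recursion on $\mathbb{P}^2$.

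Concretely, I would work on $\overline{\mathcal{M}}(0, r+s+1, d[\text{line}])$ with marked points $q, p_1, \ldots, p_{r+s}$, where $q$ is an auxiliary point, choose four distinguished marked points (say $q, p_1, p_2, p_{r+1}$), and form the stabilization morphism
$$\phi \colon \overline{\mathcal{M}}(0, r+s+1, d[\text{line}]) \longrightarrow \overline{\mathcal{M}}_{0,4} \cong \mathbb{P}^1.$$
Pulling back the linear equivalence among the three boundary points of $\overline{\mathcal{M}}_{0,4}$ and integrating against the classes from the definition of $N_d(r, s, \theta)$ (together with a suitable class at $q$ chosen so that dimensions match and the divisor equation $H \cdot d[\text{line}] = d$ isolates $N_d(r,s,\theta)$ on one side) will produce the recursion.

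By the splitting axiom for the family virtual class (Theorem~A), each pulled-back boundary divisor is a sum of strata of the form
$$\overline{\mathcal{M}}(0, n_A+1, d_A) \times_E \overline{\mathcal{M}}(0, n_B+1, d_B), \qquad d_A + d_B = d,$$
with the distinguished marked points partitioned as the divisor dictates and the remaining marked points distributed accordingly. The node insertion uses the diagonal class $\Delta_E \in H^\ast(E \times E)$, which one writes as $\sum_\alpha T_\alpha \otimes T^\alpha$ in the basis $\{a^i H^j : 0 \le i \le 3,\ 0 \le j \le 2\}$ of
$$H^\ast(E) \cong \mathbb{Z}[a, H]/\bigl(a^4,\ H^3 - aH^2 + a^2 H - a^3\bigr).$$
Dimension matching restricts the diagonal sum to finitely many terms. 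The base-diagonal part $\Delta_{\mathbb{G}} = \sum_{i=0}^3 a^i \otimes a^{3-i}$ of $\Delta_E$ (with $\dim \mathbb{G} = 3$) accounts for the shift $\theta_1 + \theta_2 = \theta + 3$ on the $a$-powers; the binomial coefficients $\binom{r-3}{r_1-1}$, $\binom{r-3}{r_1}$, and $\binom{s}{s_1}$ enumerate the distributions of the remaining line- and point-constraint marked points; and for non-degenerate strata ($d_1, d_2 \ge 1$), repeated application of the divisor equation for $H$ (at the auxiliary point $q$ and at each node whenever the corresponding diagonal factor contains an $H$) converts $H$-insertions into multiplicative factors $d_1^a d_2^b$, producing the Kontsevich-type weights $d_1^2 d_2^2$ and $d_1^3 d_2$. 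The linear terms $2d N_d(r-1, s, \theta+1)$ and $-2d^2 N_d(r-2, s, \theta+2)$ arise from degenerate strata where $d_A = 0$ or $d_B = 0$: on such a stratum one component carries a constant map, its moduli factor collapses to $\overline{\mathcal{M}}_{0, n_A+1} \times E$, and Poincar\'e duality on $E$ reduces the contribution to a single $N_d$ with shifted parameters, with the factors $d, d^2$ coming from the divisor equation and the factor $2$ from symmetries of the degenerate splittings.

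The main technical obstacle is twofold. First is the bookkeeping of the diagonal decomposition: the basis $\{a^i H^j\}$ is not orthogonal under the intersection pairing on $E$ --- for instance, $H$ pairs nontrivially with both $a^3 H$ and $a^2 H^2$, so the dual basis has off-diagonal corrections and each contributing term of $\Delta_E = \sum T_\alpha \otimes T^\alpha$ must be computed individually. Second is the identification of the correct linear combination of the WDVV relations on $\overline{\mathcal{M}}_{0,4}$ needed to isolate $N_d(r, s, \theta)$ on the left-hand side while producing the exact coefficients on the right; in the Kontsevich case on $\mathbb{P}^2$ this falls out trivially because the diagonal is orthogonal, but in our family case the decorated-base contributions interact nontrivially with the diagonal expansion, so the proper choice of distinguished points and auxiliary class (and the tracking of cancellations between the resulting expansions) is the heart of the argument. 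Once these details are carried out, using the constraint $r + 2s + \theta = 3d + 2$ throughout, the stated recursion emerges.
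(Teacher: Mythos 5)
Your overall philosophy is the right one --- Theorem~B does follow from a WDVV\hspace{0.6mm}/\hspace{-0.6mm}splitting argument on the family moduli space of Theorem~A --- but the proposal as written is a plan with the crucial step left unexecuted, and your specific setup diverges from the one that actually produces the stated coefficients.

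First, your choice of distinguished marked points $(q, p_1, p_2, p_{r+1})$, carrying (some class at $q$), $a^\theta H^2$, $H^2$, $H^3$, does not match the WDVV identity the recursion comes from. The correct choice is to compare the degenerations with insertions $(H, H \mid H^2, H^2)$ against $(H, H^2 \mid H, H^2)$; in the potential language of \cref{quantum cohomology} this is precisely the identity \eqref{WDVV} with derivatives $\partial t_{01}\partial t_{01}$ vs.\ $\partial t_{02}\partial t_{02}$ and their cross terms. Inserting $H^3$ at one of the four distinguished points yields a different identity, and there is no argument in your proposal that it would produce the coefficients $2d$, $-2d^2$, $d_1^2 d_2^2 \binom{r-3}{r_1-1}$, $-d_1^3 d_2 \binom{r-3}{r_1}$. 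Second, the bookkeeping is not merely ``off-diagonal corrections that must be computed individually''; the derivation hinges on two devices you do not anticipate: (a) enlarging the generating set to include the \emph{dependent} class $T_{03} = H^3 = aH^2 - a^2 H + a^3$ with its own formal variable $t_{03}$, so that the $s$ point constraints can be tracked cleanly rather than expanded in the basis $\{a^i H^j\}$; and (b) the formal substitution $t_{ij} \mapsto u^i t_{0j}$, which collects the $a$-degree into a single parameter $u$ and makes the shift $\theta_1 + \theta_2 = \theta + 3$ visible as the power $u^{\theta_1+\theta_2-3}$ coming from the full diagonal \eqref{diag} --- not just its ``base part'' $\sum a^i \otimes a^{3-i}$, which is a heuristic that does not literally hold since $\Delta_E$ does not factor across the fibration. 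Finally, you acknowledge that ``the identification of the correct linear combination of WDVV relations \ldots\ is the heart of the argument'' and defer it with ``once these details are carried out, \ldots\ the stated recursion emerges.'' That deferred step is precisely the content of Theorem~B; asserting the outcome without the computation is a genuine gap, not a technicality. As it stands the proposal identifies the right toolkit but does not constitute a proof, and the specific four-point configuration you chose would need to be abandoned in favor of the $(H,H,H^2,H^2)$ one before the coefficients could even come out correctly.
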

\smallskip

\noindent Using the recursion in Theorem \ref{thm_B}, one can check that the number $N_3(11,0,0)$ of rational planar cubics passing through 11 lines in $\mathbb{P}^3$ is $12960$ which is known in classical algebraic geometry (cf. \cite[Appendix A]{TL} and \cite[Section 4]{BMS}).

\subsection*{Plan of the paper}

In Section \ref{section family version of moduli spaces}, we study the moduli space of stable maps to a fiber 
bundle $\pi\,:\,E\,\longrightarrow\, B$ whose typical fiber is $F$. As in the usual case of moduli problem of stable maps, it is 
shown that the new moduli problem is a fine moduli space as a Deligne-Mumford stack and it is coarsely represented 
by a projective variety (see \Cref{moduli_existence}). We construct it as a fiber bundle over $B$ with fiber 
$\overline{\mathcal{M}}_{g,n}(F, \beta)$.

In Section \ref{family version GW invariants} we construct a virtual cycle for the moduli space to define 
invariants using integrations over it. With a mild abuse of nomenclature, we will continue to call these 
intersection numbers as fiberwise GW-invariants, as the geometry suggests, these invariants are family versions
of the usual GW-invariants. To distinguish, we may use the name ``ordinary GW-invariants'' for the invariants 
that are defined using the Kontsevich moduli space $\overline{\mathcal{M}}_{g,n}(F, \beta)$.

In an analogy with the ordinary GW-theory, these newly defined fiberwise GW-invariants also satisfy various properties 
including the WDVV equation in $g\,=\,0$. We prove this in Theorem \ref{WDVV theorem} in Section \ref{quantum 
cohomology}. Finally, in \Cref{rational planar curve section}, we apply the WDVV equation to count the 
rational planar curves on $\mathbb{P}^3$ of degree $d$, providing a proof of \Cref{thm_B}.

\section{Family version of moduli spaces}\label{section family version of moduli spaces}

\subsection*{Target space}

Let $F$ be a non-singular projective variety and $\pi\,:\,E\,\longrightarrow\, B$ a $F$-fiber bundle over a
non-singular base $B$. Given integers $g,\,n \,\geq\,0$ and $\beta \,\in\, H_2(F,\,\mathbb{Z})$, we want to
construct a moduli space parametrizing stable maps $\mu\,:\, (C,\,p_1,\, \ldots ,\,p_n)\,\longrightarrow\,
E$ defined over $B$ whose fibers are isomorphic to the moduli space $\overline{\mathcal{M}}_{g,n}(F, \beta)$. To
construct it generally, we encounter the following problem:  \\
Let $(U\,\subset\, B,\, \phi\,:\,\pi^{-1}U\,\xrightarrow{\,\sim\,}\, U\times F)$ and $(V\,\subset\, B,\,
\psi\,:\,\pi^{-1}V\,\xrightarrow{\,\sim\,}\, V\times F)$ be two trivializations of the fiber bundle such
that $U \cap V$ is non-empty. Then the induced map $$\phi \circ \psi^{-1}\,:\, U \cap V \,
\longrightarrow\, \mathrm{Aut} (F)$$ yields an isomorphism $\overline{\mathcal{M}}_{g,n}(F, \beta)
\,\longrightarrow\, \overline{\mathcal{M}}_{g,n}(F, \left(\phi \circ \psi^{-1}(b)\right)_*(\beta))$ for
$b\,\in\, U \cap V$. Thus the transition maps of the moduli space may depend on the choices of trivializations. To solve the problem we consider the subgroup of $\mathrm{Aut}(F)$:
\begin{equation}\label{thep}
G\, :=\, \{ \phi \,\in \,\mathrm{Aut}(F) \,\big\vert\,\,
\phi_*\, \in\, \text{Aut}(H_2(F,\, \mathbb{Z}))\,\,
\text{ is } \,\, {\rm Id}_{H_2(F,~ \mathbb{Z})}\}
\end{equation}	
introduced in the introduction (cf. \cref{the structure group_1}); we assume $E$ to be a $F$-fiber bundle over $B$ with structure group $G$. 

\subsection*{Moduli space} For any scheme $S$ over $\mathbb{C}$, a \textit{family of maps} over $S$ from
$n$-pointed genus $g$ curves to $E$, and compatible with $\pi$, consists of the data
\begin{equation}\label{df}
\left(\widetilde{\pi}\,:\, \mathcal{C} \,\rightarrow\, S, \,\{ p_i\}_{1 \leq i \leq n},\,
\mu\,:\, \mathcal{C } \,\rightarrow\, E, \, \rho\,:\, S \,\rightarrow\, B \right)
\end{equation}
such that
\begin{enumerate}[label=\textnormal{(\roman*)}]
\item $\widetilde{\pi}\,:\, \mathcal{C}\,\longrightarrow \,S$ is a
family of $n$-pointed genus $g$ prestable curves equipped with $n$ disjoint sections
$\{ p_1,\, \cdots,\, p_n\}$ of $\widetilde{\pi}$, and

\item the diagram
$$
\xymatrix{
 \mathcal{C} \ar[rr]^{\mu} \ar[d]_{\widetilde{\pi}} & & E \ar[d]^{\pi}\\
 S \ar[rr]_{\rho} & & B
}
$$
is commutative.
\end{enumerate}
Two such families of maps $\left(\widetilde{\pi}: \mathcal{C} \rightarrow S, \{ p_i\}, \mu ,  \rho  \right)$
and $\left(\pi^{\prime}: \mathcal{C}^{\prime} \rightarrow S, \{ p_i^{\prime}\}, \mu^{\prime},
\rho^{\prime}\right)$ are isomorphic if there exists an isomorphism $\tau\,:\, \mathcal{C}
\,\longrightarrow\, \mathcal{C}^{\prime}$ for which $\mu^{\prime} \circ \tau \,=\, \mu$,
$\pi^{\prime} \circ \tau \,=\, \widetilde{\pi}$ and $\tau \circ p_i\,=\, p_i^{\prime}$. Notice that these imply
that $\rho\,=\, \rho^{\prime}$. A family of pointed maps $\left(\widetilde{\pi}\,:\, \mathcal{C}
\,\rightarrow \,S,\, \{ p_i\},\, \mu ,\,  \rho\right)$ is stable if the pointed family
$\left(\widetilde{\pi}\,:\, \mathcal{C} \,\rightarrow\, S, \,\{ p_i\},\, \mu\right)$ is
stable (cf. \cite[Definition 1.1]{FuPa}). 

Take $\beta \,\in\, H_2(F,\,\mathbb{Z})$. Denote by $R_2\pi_*\underline{\mathbb{Z}}$ the constant sheaf on
$B$
\begin{align}\label{R_2}
R_2\pi_*\underline{\mathbb{Z}}\ :=\ R^{2\dim F-2}\pi_*\underline{\mathbb{Z}}.
\end{align}
Note that the local system $R^{2\dim F-2}\pi_*\underline{\mathbb{Z}}$ is really a constant sheaf of
abelian groups due to the choice of our structure group $G$ for $\pi$ (see \eqref{thep}).
Let $\widetilde{\beta}$ be the global section of $R_2\pi_*\underline{\mathbb{Z}}$
corresponding to (the Poincar\'e dual of) $\beta$. Consider the family of
pointed maps over $S$ in \eqref{df}. For each $s$ in $S$, let $\mu_s\,:\, \mathcal{C}_s \,\longrightarrow\,
E_{\rho(s)}$ be the induced morphism of $\mu$ between the base changes $\mathcal{C}_s\,:=\,
\widetilde{\pi}^{-1}(s)$ and $E_{\rho(s)}\,:=\, \pi^{-1}(\rho(s))$. 
Then the family in \eqref{df} is said to
\textit{represent the class $\widetilde{\beta}$} if we have $\left(\mu_s\right)_*\left( 
[\mathcal{C}_s]\right)\,=\, \widetilde{\beta}\big\vert_{\rho(s)}$.

The \textit{moduli functor} 
\begin{equation}\label{moduli functor}
\overline{\mathcal{M}}^{\mathrm{Fib}}_{g,n}(E/B,\,\widetilde{\beta})\ :\ Schemes/ 
\mathbb{C} \,\longrightarrow\, Sets
\end{equation}
is defined as follows:
$\overline{\mathcal{M}}^{\mathrm{Fib}}_{g,n}(E/B,\,\widetilde{\beta})(S)$ is
the set of isomorphism classes of families of $n$-pointed genus $g$ stable maps to $E$ over $S$ compatible 
with $\pi$ and representing the class $\widetilde{\beta}$. Note that when $B$ is a point,
then the moduli functor becomes 
the usual moduli functor $\overline{\mathcal{M}}_{g,n}(F, \beta)$ of $n$-pointed genus $g$ stable maps 
to $F$ representing the class $\beta$, as defined in \cite[Definition 1.1]{FuPa}. Thus the moduli functor 
considered in \eqref{moduli functor} is a fiber bundle analogue of the usual one.

\subsection*{Existence}
Given a projective variety $X$, and a class $\beta \,\in\, H_2(X,\,\mathbb{Z})$, the classical moduli
space $\overline{\mathcal{M}}_{g,n}(X, \beta)$ is a fine moduli space as a Deligne-Mumford stack, and it is
coarsely represented by a projective variety $\overline{M}_{g,n}(X,\beta)$. We want to do the same for the
moduli functor $\overline{\mathcal{M}}^{\mathrm{Fib}}_{g,n}(E/B,\widetilde{\beta})$ given in
\eqref{moduli functor}. If $\overline{\mathcal{M}}^{\mathrm{Fib}}_{g,n}(E/B,\widetilde{\beta})$ is
coarsely represented by a scheme, then it is unique and the set of complex points of it is a one-to-one
correspondence with $\overline{\mathcal{M}}^{\mathrm{Fib}}_{g,n}(E/B,\widetilde{\beta})(\mathrm{Spec}
(\mathbb{C}))$, that is, the set of all $n$-pointed, genus $g$ stable maps
$\mu\,:\,(C,\, p_1,\, \ldots ,\,p_n)\,\longrightarrow\, E$ such that $\pi \circ \mu$ is a constant
map, up to isomorphism. Equivalently, this is the set of $n$-pointed, genus $g$ stable maps
$\mu\,:\,(C,\, p_1,\, \ldots ,\,p_n)\,\longrightarrow\, E$ such that $\mu(C)$ lies inside a fiber. We denote
by $\overline{M}^{Fib}_{g,n}(E/B,\widetilde{\beta})$ the coarse moduli space, if it exists. Then we get natural maps 
$$
\overline{\mathcal{M}}^{\mathrm{Fib}}_{g,n}(E/B,\widetilde{\beta})\ \longrightarrow\
\overline{M}^{\mathrm{Fib}}_{g,n}(E/B,\widetilde{\beta})\ \longrightarrow\ B
$$ 
whose fiber over each point $b\,\in\, B$ is $\overline{\mathcal{M}}_{g,n}(E_b, \beta)
\,\longrightarrow\, \overline{M}_{g,n}(E_b, \beta)$. 

Recall that the structure group of the fibration $\pi\,:\,E\,\longrightarrow\, B$ is $G$. So $H_2(\pi^{-1}(b),\,
{\mathbb Z})$ is canonically identified with $H_2(F,\, {\mathbb Z})$ for every $b\, \in\, B$.
Let $$\varpi\, :\, E_G \, \longrightarrow\, B$$ be the principal $G$--bundle over $B$ corresponding to
$\pi$. So the fiber $\varpi^{-1}(b)$ over any $b\, \in\, B$ is the space of all isomorphisms
$F\, \longrightarrow\, \pi^{-1}(b)$ that act on $H_2(F,\, {\mathbb Z})$ as the identity map.
There are natural homomorphisms
\begin{equation}\label{eh}
H_s\,:\, G \,\longrightarrow\, {\rm Aut}(\overline{\mathcal{M}}_{g,n}(F,\, \beta)) \ \ \,\text{ and }\ \
\, H_m\, :\, G \,\longrightarrow\, {\rm Aut}(\overline{M}_{g,n}(F, \,\beta)).
\end{equation}
Now define 
\begin{equation}\label{eh2}
\overline{\mathcal{M}}\, \longrightarrow\, B\ \ \, \text{ and }\ \
\,\overline{M}\, \longrightarrow\, B
\end{equation}
to be the fiber bundles associated to the
principal $G$--bundle $E_G$ for the homomorphisms $H_s$ and $H_m$ respectively in \eqref{eh}. From the
constructions of the fiber bundles in \eqref{eh2} it follows immediately that there is a natural morphism
\begin{equation}\label{eh3}
\overline{\mathcal{M}}\, \longrightarrow\, \overline{\mathcal{M}}^{\mathrm{Fib}}_{g,n}(E/B,\,\widetilde{\beta}).
\end{equation}
It is straightforward to check that the morphism in \eqref{eh3} is an isomorphism. For each $b \,\in\, B$, the
fiber of the bundle $\overline{M} \,\longrightarrow\, B$ over $b$ is $\overline{M}_{g,n}(E_b, \beta)$. Let us denote the fiber bundle $\overline{M}$ by $\overline{M}^{\mathrm{Fib}}_{g,n}(E/B,\widetilde{\beta})$. 

Since $\overline{\mathcal{M}}_{g,n}(F,\, \beta)$ is a proper separated Deligne--Mumford stack, from the above construction
of $\overline{\mathcal{M}}$ it follows immediately that it is also
a Deligne--Mumford stack. The underlying coarse moduli space for $\overline{\mathcal{M}}_{g,n}(F,\, \beta)$
is $\overline{M}_{g,n}(F,\, \beta)$. From this it follows immediately that 
the underlying coarse moduli space for $\overline{\mathcal{M}}$ is $\overline{M}$ and the composition
of maps $\overline{\mathcal{M}}\,\longrightarrow\, \overline{M}\,\longrightarrow\, B$ recovers the one
in \eqref{eh2}. The above discussion may be summarized in the following theorem:

\begin{thm}\label{moduli_existence}
Let $B$ be a non-singular projective variety and $\pi\,:\,E \,\longrightarrow\, B$
an $F$-fiber bundle with structure group $G$ be defined as in \cref{thep} such that $F$ is a non-singular
projective variety. Let $\beta \,\in\, H_2(F,\,\mathbb{Z})$ be an effective homology class, and $\widetilde{\beta}$ be
the corresponding global section of $R_2\pi_* \underline{\mathbb{Z}}$ (\textnormal{cf.} \eqref{R_2}).
Then the following two statements hold:
\begin{enumerate}[label=\textnormal{(\roman*)}]
\item The moduli functor $\overline{\mathcal{M}}^{\mathrm{Fib}}_{g,n}(E/B,\,\widetilde{\beta})$ defined
in \cref{moduli functor} is a proper Deligne-Mumford stack (DM-stack for short).

\item The fiber bundle $\overline{M}^{\mathrm{Fib}}_{g,n}(E/B,\widetilde{\beta})\,\longrightarrow\, B$ is
the coarse moduli space for $\overline{\mathcal{M}}^{\mathrm{Fib}}_{g,n}(E/B,\,\widetilde{\beta})$.
\end{enumerate}
\end{thm}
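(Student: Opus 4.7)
The plan is to make precise the associated-bundle construction that is sketched just before the statement. The structure-group hypothesis in \eqref{thep} is the point of the whole setup: every $\phi \in G$ acts on $H_2(F,\mathbb{Z})$ as the identity, so post-composition $(\mu,p_i)\mapsto(\phi\circ\mu,p_i)$ defines an automorphism of $\overline{\mathcal{M}}_{g,n}(F,\beta)$ itself (no shift of $\beta$ occurs), yielding the homomorphism $H_s$ of \eqref{eh} (and $H_m$ similarly on the coarse moduli). The fibers of the associated bundles $\overline{\mathcal{M}}$ and $\overline{M}$ of \eqref{eh2} over each $b\in B$ are then canonically $\overline{\mathcal{M}}_{g,n}(E_b,\beta)$ and $\overline{M}_{g,n}(E_b,\beta)$.

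The core step is to construct a universal family on $\overline{\mathcal{M}}$ with target $E$ and to check it represents $\overline{\mathcal{M}}^{\mathrm{Fib}}_{g,n}(E/B,\widetilde{\beta})$. I would first pull back the universal stable-map family from $\overline{\mathcal{M}}_{g,n}(F,\beta)$ to $E_G\times \overline{\mathcal{M}}_{g,n}(F,\beta)$; at each $\xi\in E_G$ the tautological isomorphism $F\xrightarrow{\sim}E_{\varpi(\xi)}$ turns the target $F$ into the corresponding fiber of $E$, producing a family of stable maps into the pullback $\varpi^{*}E$ whose homological type is the constant global section $\widetilde{\beta}$ (constant precisely because $G$ preserves $\beta$). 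This family is equivariant under the diagonal $G$-action through $H_s$, hence descends along $E_G\to B$ to a family on $\overline{\mathcal{M}}$ with target $E$ compatible with $\pi$, inducing the natural morphism \eqref{eh3}.

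To prove \eqref{eh3} is an isomorphism of stacks, I would argue \'etale-locally on $B$: over a trivializing open $U\subset B$ both sides become canonically isomorphic to $U\times \overline{\mathcal{M}}_{g,n}(F,\beta)$, while the gluing data along double overlaps is prescribed on either side by applying $H_s$ to the $G$-valued transition functions of $E_G$. Yoneda with the descended universal family then identifies the two functors. From the local-product description, item (i) follows: $\overline{\mathcal{M}}$ is locally \'etale a product with a proper separated DM-stack, so it is DM, and properness is inherited since $B$ is projective and the fiber is proper. For item (ii), the same associated-bundle construction applied to $H_m$ in place of $H_s$ identifies $\overline{M}^{\mathrm{Fib}}_{g,n}(E/B,\widetilde{\beta})$ with the coarse moduli of $\overline{\mathcal{M}}^{\mathrm{Fib}}_{g,n}(E/B,\widetilde{\beta})$, because formation of coarse moduli spaces commutes with \'etale base change and hence with the associated-bundle construction.

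The main technical obstacle is the descent in the second step: producing a universal family on $\overline{\mathcal{M}}$ that is a genuine family of stable maps to $E$ (not only fiberwise) and that represents the global section $\widetilde{\beta}$ rather than some merely locally-constant homology class. The hypothesis \eqref{thep} on the structure group is exactly what makes this descent coherent, since it keeps $\beta$ fixed under all transition functions; this simultaneously ensures that $H_s$ is well-defined as an action on $\overline{\mathcal{M}}_{g,n}(F,\beta)$ and that the descended family has prescribed global class $\widetilde{\beta}$ on the nose.
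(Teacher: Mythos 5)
Your proposal is correct and follows essentially the same route as the paper: build $\overline{\mathcal{M}}$ and $\overline{M}$ as associated fiber bundles over the principal $G$-bundle $E_G\to B$ via the homomorphisms $H_s$ and $H_m$, exhibit the natural map to the moduli functor, and deduce (i) and (ii) from the corresponding properties of $\overline{\mathcal{M}}_{g,n}(F,\beta)$ via the local-product / \'etale descent structure. You actually supply more detail than the paper does (the paper declares the descent of the universal family and the isomorphism in \eqref{eh3} to be ``straightforward''), so your write-up is a legitimate elaboration of the same argument.
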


\smallskip
To simplify the notation, we denote by $\overline{\mathcal{M}}_{g,n}(E/B, \beta)$ (respectively, 
$\overline{M}_{g,n}(E/B,\beta)$) the space 
$\overline{\mathcal{M}}^{\mathrm{Fib}}_{g,n}(E/B,\,\widetilde{\beta})$ (respectively, 
$\overline{M}^{\mathrm{Fib}}_{g,n}(E/B,\,\widetilde{\beta})$) unless there is scope for confusion. The 
reader should not be confused with the usual notation $\overline{\mathcal{M}}_{g,n}(X, \beta)$. Here 
$\beta$ is a homology class of $X$, whereas in our case, it is a class of $F$ and not of $E$.

\subsection*{Smoothness when $g=0$} For each point $b\,\in\, B$, let
\begin{equation}\label{j1}
\overline{\mathcal{M}}_{g,n}^*(E_b,\beta) \,\subseteq\, \overline{\mathcal{M}}_{g,n}(E_b,\beta)
\end{equation}
be the open locus of stable maps with no non-trivial 
automorphism. This yields a fiber bundle $\overline{\mathcal{M}}^*_{g,n}(E/B, \beta) \,\subseteq\, 
\overline{\mathcal{M}}_{g,n}(E/B, \beta)$ over $B$ with fiber $\overline{\mathcal{M}}_{g,n}^*(F,\beta)$. We 
finally aim to find the WDVV equations. Hence we focus more on the case of $g\,=\,0$. Theorem 2 and 3 of 
\cite{FuPa} can immediately be extended to the set-up of fiber bundles.

\begin{thm}\label{properties of coarse moduli space}
Consider the $F$-fiber bundle $\pi\,:\, E \,\longrightarrow\, B$ as in \Cref{moduli_existence}. Let $F$
and $B$ be non-singular complex projective varieties. Furthermore, assume $F$ is a convex variety. Then the following statements hold:
\begin{enumerate}[label=\textnormal{(\roman*)}]
\item $\overline{\mathcal{M}}_{0,n}^{*}(E/B,\beta)$ in \eqref{j1} is a smooth proper separated Deligne-Mumford stack of pure dimension $\dim B + \dim \overline{\mathcal{M}}_{0,n}(F,\beta)$. 

\item The boundary of $\overline{\mathcal{M}}_{0,n}(E/B,\beta)$ is a divisor with normal crossings.

\item The coarse moduli $\overline{M}_{0,n}^{*}(E/B,\beta)$ of $\overline{\mathcal{M}}_{0,n}^{*}(E/B,\beta)$ is a non-singular fine moduli space.
\end{enumerate}
\end{thm}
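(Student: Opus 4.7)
The plan is to reduce all three assertions to the classical theorems \cite[Theorems 2 and 3]{FuPa} via the fiber bundle description of $\overline{\mathcal{M}}_{0,n}(E/B,\beta)$ from \Cref{moduli_existence}. That theorem identifies the moduli stack with the fiber bundle associated to the principal $G$-bundle $E_G \to B$ through the homomorphism $H_s$ of \eqref{eh}; the same construction applied to the open substack $\overline{\mathcal{M}}_{0,n}^*(F,\beta)$ produces $\overline{\mathcal{M}}_{0,n}^*(E/B,\beta)$, and analogously for the coarse moduli using $H_m$. Since smoothness, purity of dimension, the normal-crossings property, and the fine/coarse moduli identification are all étale-local on $B$, it suffices to verify each assertion in an étale trivialization $U \to B$ of $E_G$, where the stacks become products $U \times \overline{\mathcal{M}}_{0,n}^*(F,\beta)$ and $U \times \overline{M}_{0,n}^*(F,\beta)$.

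For (i), in such a trivialization the stack is a product of the smooth variety $U$ (of dimension $\dim B$) with the smooth separated DM-stack $\overline{\mathcal{M}}_{0,n}^*(F,\beta)$ of pure dimension $\dim \overline{\mathcal{M}}_{0,n}(F,\beta)$, by \cite[Theorem 2]{FuPa}. Smoothness, the DM property, separatedness, and purity of dimension $\dim B + \dim \overline{\mathcal{M}}_{0,n}(F,\beta)$ follow at once from this product structure, while properness is inherited from the global properness of $\overline{\mathcal{M}}_{0,n}(E/B,\beta)$ established in \Cref{moduli_existence}. For (ii), étale-locally the boundary is $U \times \partial\overline{\mathcal{M}}_{0,n}(F,\beta)$; the classical boundary is a normal-crossings divisor by \cite[Theorem 2]{FuPa}, and taking the Cartesian product with the smooth $U$ preserves the normal-crossings condition, so these local descriptions glue to a normal-crossings divisor on the total space.

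For (iii), \cite[Theorem 3]{FuPa} provides that $\overline{M}_{0,n}^*(F,\beta)$ is a non-singular fine moduli space carrying a universal family. The total space of the bundle $\overline{M}_{0,n}^*(E/B,\beta) \to B$ is then smooth because both the fiber and the base are smooth. The only delicate step, and the one I expect to be the main obstacle, is constructing a universal family over $\overline{M}_{0,n}^*(E/B,\beta)$ itself: the point is that the universal family on $\overline{M}_{0,n}^*(F,\beta)$ is canonically $G$-equivariant because any $\phi \in G$ acts on $F$ preserving the homology class $\beta$ (by the definition of $G$ in \eqref{thep}), and therefore permutes stable maps functorially together with their domain curves. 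Associating this $G$-equivariant universal family to the principal bundle $E_G \to B$ yields a family of stable maps over $\overline{M}_{0,n}^*(E/B,\beta)$ which represents the moduli functor on the $*$-locus, establishing the fine moduli property.
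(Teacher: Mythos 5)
Your approach is essentially the same as the paper's: the paper offers no explicit proof, stating only that ``Theorem 2 and 3 of \cite{FuPa} can immediately be extended to the set-up of fiber bundles,'' and your \'etale-local reduction to the trivialized product $U \times \overline{\mathcal{M}}_{0,n}^*(F,\beta)$ together with the $G$-equivariance of the universal family is precisely the natural way to carry out that extension. One caveat worth flagging: your claim that properness of $\overline{\mathcal{M}}_{0,n}^*(E/B,\beta)$ is ``inherited from'' the properness of the full stack $\overline{\mathcal{M}}_{0,n}(E/B,\beta)$ does not hold as stated, since an open substack of a proper stack need not be proper, and indeed \cite[Theorem 3]{FuPa} does not assert properness of the automorphism-free locus in the classical case; so the word ``proper'' in part (i) of the theorem deserves scrutiny rather than a deduction from openness.
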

\smallskip

\subsection*{Natural structures}
We now discuss some natural structures of $\overline{\mathcal{M}}_{g,n}(E/B, \beta)$ which will play an important role later on.

Given a family $\left(\widetilde{\pi}: \mathcal{C} \rightarrow S, \{ p_i\}, \mu: \mathcal{C } \rightarrow E,
\rho: S \rightarrow B \right)$ of fibered stable maps to $E$ representing the class $\widetilde{\beta}$, we
have natural morphisms $\phi_i\,:\, S \,\longrightarrow\, E$ given by $\mu \circ p_i$. Thus it defines a bundle map
\begin{equation}\label{evaluation map}
\mathrm{ev}_i\ :\ \overline{\mathcal{M}}_{g,n}(E/B, \beta) \,\longrightarrow\, E
\end{equation}
which sends each closed point $ [\mu: \left(C,p_1, \ldots,p_n \right) \rightarrow E ]$ to $\mu(p_i)$. These
are called as the \textit{evaluation maps}.

When $2g-2+n\,>\,0$, we have the stabilization map $\mathfrak{m}_{g,n}\,\longrightarrow\,
\overline{\mathcal{M}}_{g,n}$ from the moduli space of prestable curves to that of stable ones. Composing
the forgetful map $\overline{\mathcal{M}}_{g,n}(E/B, \beta)\,\longrightarrow\, \mathfrak{m}_{g,n}$ with
the stabilization map, we obtain a map
\begin{equation}\label{forgeting target}
\overline{\mathcal{M}}_{g,n}(E/B, \beta)\,\longrightarrow\, \overline{\mathcal{M}}_{g,n},
\end{equation}
which, with an abuse of terminology, is called the stabilization map.

When $\beta\,=\,0$, the usual moduli space $\overline{\mathcal{M}}_{g,n}(F,\beta)$ is the same as
$\overline{\mathcal{M}}_{g,n}\times F$. In our set-up, there is a similar description of
$\overline{\mathcal{M}}_{g,n}(E/B, 0)\,\cong\, \overline{\mathcal{M}}_{g,n}\times E$ when $\beta\,=\,0$. 

For each fiber, the natural forgetful map $\pi_{n+1}\,:\, \overline{\mathcal{M}}_{g,n+1}(F, \beta)\,
\longrightarrow\, \overline{\mathcal{M}}_{g,n}(F, \beta)$ that simply forgets the $(n+1)^{\text{th}}$
marked points can be thought of as the universal curve over $\overline{\mathcal{M}}_{g,n}(F, \beta)$. Combining
together we derive a map of bundles 
\begin{equation}\label{forgetful map one mark point}
\pi_{n+1}\,:\, \overline{\mathcal{M}}_{g,n+1}(E/B, \beta)\,\longrightarrow\, \overline{\mathcal{M}}_{g,n}(E/B,\beta).
\end{equation}
This can be thought of as the universal curve over $\overline{\mathcal{M}}_{g,n}(E/B,\beta)$.

When $B\,=\,\mathrm{Spec}(\mathbb{C})$, the moduli space $\overline{\mathcal{M}}_{g,n}(E/B, \beta)$ is the classical moduli space $\overline{\mathcal{M}}_{g,n}(E,\beta)$ which is the same as $\overline{\mathcal{M}}_{g,n}(F,\beta)$. Restricted to each fiber, the maps in \eqref{evaluation map}, \eqref{forgeting target}, \eqref{forgetful map one mark point} are morphisms of DM-stacks.

\section{Family version of Gromov-Witten invariants}\label{family version GW invariants}
\subsection{Virtual fundamental class}

Here, $B$ is a nonsingular variety, but it is no longer assumed to be projective.
As before, $F$ is smooth projective.
With the notation of \cite{Gillet} and \cite{Vistoli_intersection_theory}, the moduli space
$\overline{\mathcal{M}}_{g,n}(F, \beta)$ carries a natural cycle lying in its Chow group, called the virtual
fundamental class. Similar to the usual case, the moduli space $\overline{\mathcal{M}}(E/B)$ may
not be smooth, irreducible or equidimensional. In order to define intersection numbers, we first need
to construct the virtual fundamental class in this setting. 

Let us look at Behrend and Fantechi's construction \cite[p.~83]{B.F} of the virtual class of 
$\overline{\mathcal{M}}_{g,n}(F, \beta)$. Let $\pi\,:\, C \,\longrightarrow\, \overline{\mathcal{M}}_{g, n}(F, \beta)$ 
be the universal curve, and let $f\,:\,C \,\longrightarrow\, F$ be the universal stable map.
Then Behrend and Fantechi showed 
that $\left( R\pi_*f^*T_F\right)^{\vee}$ gives rise to a relative perfect obstruction theory with respect to 
the forgetful map $p\,:\,\overline{\mathcal{M}}_{g,n}(F, \beta)\,\longrightarrow\, \mathfrak{m}_{g,n}$
to the moduli space of prestable curves. Recall that
the cone of $(R\pi_*f^*T_F)^\vee[-1] \,\longrightarrow\, p^*T^*_{\mathfrak{m}_{g,n}}$ is a natural 
absolute perfect obstruction theory. With the help of the obstruction theory, they constructed the virtual 
fundamental class. A similar construction will be done here.

Let $C$ be an algebraic curve over $B$ which is flat over $B$. Let us denote the $B$-space 
$\mathrm{Mor}_B(C,\,E)$, the space of $B$-morphisms from $C$ to $E$, by $M$. Let $f\,:\,C \times_B M \,
\longrightarrow\, E$ be the 
universal morphism, and let $\pi\,:\, C \times_B M\,\longrightarrow\, M$ be the natural projection. By the
functorial property, we get the following homomorphism of cotangent complexes:
$$
f^*T^*_{E/B}\ \longrightarrow\ \mathbb{L}_{C\times_B M/B}\ \longrightarrow\ \mathbb{L}_{C\times_B M/C}.
$$
Since $C$ is flat over $B$, the homomorphism 
$$
\pi^*\mathbb{L}_{M/B}\ \longrightarrow\ \mathbb{L}_{C\times_B M/C}
$$ 
is an isomorphism. Combining these we get an induced homomorphism 
$$
e\,:\, f^*T^*_{E/B}\ \longrightarrow\ \pi^*\mathbb{L}_{M/B}.
$$
Assume that $C$ is an algebraic curve with at worst nodes as singular points. Then $C\times_B M$ has a relative dualizing sheaf $\omega$. By Serre duality,
$$
K^{\bullet}\ :=\ R\pi_*\left( f^* T_{E/B} \otimes \omega\right)[1]\ =\ \left(R\pi_*f^*T_{E/B}\right)^{\vee},
$$
where $(M^{\bullet})^\vee$ is the derived dual of $M^{\bullet}$ in the derived category. Thus we obtain an induced morphism 
\begin{equation}\label{obstruction morphism}
\phi\,:=\, R\pi_*(e\otimes\omega[1])^{\vee}\,:\, K^{\bullet} \,\longrightarrow\,
R\pi_*\pi^*(\mathbb{L}_{M/B}\otimes\omega[1])\,\cong\, R\pi_*\pi^!\mathbb{L}_{M/B}
\,\longrightarrow\, \mathbb{L}_{M/B}.
\end{equation}
In \cite{B.F}, Behrend-Fantechi proved that $\phi$, given by \eqref{obstruction morphism}, is a relative perfect obstruction theory of $M$ over $B$.

Our next goal is to show that $\phi$ defines in fact an absolute perfect obstruction theory.

\begin{thm}
As before, $B$ is a non-singular variety, and $\pi\,:\,E \,\longrightarrow\, B$ is a $F$-fiber bundle
with structure group $G$, where $F$ is smooth projective.
Let $\beta \,\in\, H_2(F,\, \mathbb{Z})$. Then there is a perfect obstruction theory
$$\mathbb{E}_{\overline{\mathcal{M}}_{g,n}(E/B,\beta)}\ \longrightarrow\
\mathbb{L}_{\overline{\mathcal{M}}_{g,n}(E/B,\beta)}$$ with the virtual dimension $\mathrm{vd}$ being
\begin{align}\label{vdim_1}
\mathrm{vd}\, =\, (\dim (F)-3)(1-g) + \int_{\beta} c_1(T_F) +n + \dim B.
\end{align}
\end{thm}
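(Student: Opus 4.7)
The plan is to adapt the Behrend-Fantechi construction from \cite{B.F} to the family setting, leveraging the relative Mor-space obstruction theory recalled in \eqref{obstruction morphism} together with smoothness of $B$ and of the Artin stack $\mathfrak{m}_{g,n}$ of prestable pointed curves.

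First I would identify the right base. Taking the product of the natural forgetful map $\overline{\mathcal{M}}_{g,n}(E/B,\beta)\to\mathfrak{m}_{g,n}$ (which records only the underlying pointed prestable curve) with the structure map $\rho:\overline{\mathcal{M}}_{g,n}(E/B,\beta)\to B$ produces
\[
p\,:\, \overline{\mathcal{M}}_{g,n}(E/B,\beta)\,\longrightarrow\, \mathfrak{m}_{g,n}\times B,
\]
whose target is smooth of pure dimension $3g-3+n+\dim B$, so its cotangent complex reduces to $\Omega_{\mathfrak{m}_{g,n}\times B}$. Let $\widetilde{\pi}:\mathcal{C}\to\overline{\mathcal{M}}_{g,n}(E/B,\beta)$ denote the universal curve and $f:\mathcal{C}\to E$ the universal stable map, which is a $B$-morphism over $\rho$. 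Running the calculation of \eqref{obstruction morphism} verbatim, but with the relative tangent sheaf $T_{E/B}$ in place of $T_E$, yields a morphism
\[
\phi\,:\, \bigl(R\widetilde{\pi}_*\,f^*T_{E/B}\bigr)^{\vee}\,\longrightarrow\, \mathbb{L}_{\overline{\mathcal{M}}_{g,n}(E/B,\beta)\,/\,\mathfrak{m}_{g,n}\times B}.
\]
The Behrend-Fantechi statement quoted immediately after \eqref{obstruction morphism} then identifies $\phi$ as a relative perfect obstruction theory of amplitude $[-1,0]$.

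The absolute obstruction theory is obtained by the usual cone construction: one sets
\[
\mathbb{E}_{\overline{\mathcal{M}}_{g,n}(E/B,\beta)}\,:=\,\mathrm{cone}\!\left(\bigl(R\widetilde{\pi}_*\,f^*T_{E/B}\bigr)^{\vee}[-1]\,\longrightarrow\, p^{*}\Omega_{\mathfrak{m}_{g,n}\times B}\right).
\]
An inspection of the induced long exact sequence shows this complex is perfect of amplitude $[-1,0]$, and it maps to $\mathbb{L}_{\overline{\mathcal{M}}_{g,n}(E/B,\beta)}$ compatibly with the cotangent triangle of $p$; a diagram chase on $h^{0}$ and $h^{-1}$ then shows the induced morphism is an obstruction theory. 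The virtual dimension is additive along the defining triangle, so
\[
\mathrm{vd}\,=\,\mathrm{rk}(p^{*}\Omega_{\mathfrak{m}_{g,n}\times B})+\chi(f^{*}T_{E/B})\,=\,(3g-3+n+\dim B)+\chi(f^{*}T_{E/B}).
\]
Applying Riemann-Roch to $f^{*}T_{E/B}$, a rank $\dim F$ bundle of degree $\int_{\beta} c_1(T_F)$ on a fiber curve of arithmetic genus $g$, gives $\chi(f^{*}T_{E/B})=\int_{\beta}c_1(T_F)+\dim F\cdot(1-g)$, and collecting terms recovers
\[
\mathrm{vd}\,=\,(\dim F-3)(1-g)+\int_{\beta} c_1(T_F)+n+\dim B,
\]
as predicted in \eqref{vdim_1}.

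The main subtlety I anticipate is that the Behrend-Fantechi construction must globalize along the principal $G$-bundle $E_G\to B$ of \eqref{eh2} rather than merely existing on local trivializations of $E\to B$. The structure-group hypothesis \eqref{thep}---that $G$ acts trivially on $H_2(F,\mathbb{Z})$---is precisely what is needed for the section $\widetilde{\beta}$ of \eqref{R_2} to be globally well-defined, for $T_{E/B}$ to be the correct globalization of the fiber tangent bundle, and for the Mor-space obstruction theory to assemble coherently over $B$. Once this compatibility is in place, perfectness, the obstruction property, and the virtual dimension formula all reduce to the classical statements of \cite{B.F} applied fiberwise.
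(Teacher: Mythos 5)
Your proposal follows essentially the same route as the paper: pass to the base $\mathfrak{m}_{g,n}\times B$, use the relative obstruction theory $\bigl(R\widetilde{\pi}_*\,f^*T_{E/B}\bigr)^{\vee}\to\mathbb{L}_{\overline{\mathcal{M}}(E/B)/\mathfrak{m}_{g,n}\times B}$ from \eqref{obstruction morphism}, cone with the cotangent bundle of the smooth base to get the absolute theory, and compute the virtual dimension via Riemann--Roch on $f^*T_{E/B}$. The only place where the paper is more explicit is in verifying that $\phi$ actually satisfies the perfect-obstruction-theory axioms ($h^0$ an isomorphism, $h^{-1}$ a surjection, amplitude $[-1,0]$): you invoke the Behrend--Fantechi statement directly, whereas the paper observes that these are local conditions and checks them on trivializing neighbourhoods $U\subset B$, where $\phi$ restricts to the flat pullback of the classical $\phi_F$; this is the concrete mechanism by which the ``globalization along $E_G$'' you flag at the end is resolved.
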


\begin{proof}
Throughout the proof, we denote the moduli space $\overline{\mathcal{M}}_{g,n}(E/B,\beta)$ by
$\overline{\mathcal{M}}(E/B)$, for notational convenience.
Replacing $M$ by $\overline{\mathcal{M}}(E/B)$, which is an open substack of the $B$-mapping space
$\mathrm{Map}_{\mathfrak{m}_{g,n}\times B}(C,E)$ where $C$ denotes the universal curve over
$\mathfrak{m}_{g,n}\times B$, the morphism $\phi$ in \eqref{obstruction morphism} gives
\begin{align}
\label{POB-complex}
\phi\,:\, K^{\bullet}\,:=\, \left(R\varpi_*f^*T_{E/B}\right)^{\vee}\,\longrightarrow\,
\mathbb{L}_{\overline{\mathcal{M}}(E/B)/{\mathfrak{m}_{g,n} \times B}},
\end{align}
where $\varpi$ and $f$ are the maps in the following universal diagram
\begin{align*}
\xymatrix{
C \times_{\mathfrak{m}_{g,n}\times B} \overline{\mathcal{M}}(E/B)  \ar[r]^-{f}\ar[d]^{\varpi} & {E/B}\\
\overline{\mathcal{M}}(E/B).
}
\end{align*}
We will show that $\phi$ in \eqref{POB-complex} gives a relative perfect obstruction theory. For that
it is enough to prove the following statements:
\begin{itemize}
\item $h^{0}( \phi)$ is isomorphism,

\item $h^{-1}(\phi)$ is surjection, and

\item $K^{\bullet}$ has cohomology only in degrees $-1$, $0$.
\end{itemize}
These are all local properties, so we check them locally. Each point in $B$ has a neighbourhood $U$ over which 
\begin{itemize}
\item $\overline{\mathcal{M}}(E/B)$ is a product $\overline{\mathcal{M}}_U\,
:=\,\overline{\mathcal{M}}_{g,n}(F,\beta)\times U$,
\item $E$ becomes a product $F\times U$,
\item $\phi$ restricts to $\phi_U$, the pullback of
$$
\phi_F\,:\, (R\pi_*f^*T_F)^\vee\,\longrightarrow\, \mathbb{L}_{\overline{\mathcal{M}}_{g,n}(F,\beta)/\mathfrak{m}_{g,n}}
$$
along the projection morphism $\overline{\mathcal{M}}_U\,\longrightarrow\, \overline{\mathcal{M}}_{g,n}(F,\beta)$.
\end{itemize}
We know that $\phi_F$ is a (relative) perfect obstruction theory. Since $\phi_U$ is its base change along
the flat morphism, it follows that $\phi_U$ is also a perfect obstruction theory.

The cone of the composition of maps
\begin{align}\label{absPOT}
K^\bullet[-1]\, \xrightarrow{\,\,\,\phi[-1]\,\,\,}\, \mathbb{L}_{\overline{\mathcal{M}}(E/B)/{\mathfrak{m}_{g,n}
\times B}}[-1]\,\longrightarrow\, \mathbb{L}_{\mathfrak{m}_{g,n} \times B}\big\vert_{\overline{\mathcal{M}}(E/B)}
\end{align}
defines an absolute perfect obstruction theory because $\mathfrak{m}_{g,n} \times B$ is smooth;
we denote it by
\begin{equation}\label{a1}
\mathbb{E}_{\overline{\mathcal{M}}_{g,n}(E/B)}.
\end{equation}
The virtual dimension is 
$$
\text{fiberwise rank of } K^\bullet + \dim(\mathfrak{m}_{g,n} \times B),
$$
which is 
$$
\dim F\cdot (1-g)+\int_\beta c_1(T_F)+3(g-1)+n+\dim B,
$$ 
by the Riemann-Roch Theorem.
\end{proof}

The perfect obstruction theory $\mathbb{E}_{\overline{\mathcal{M}}_{g,n}(E/B)}$
in \eqref{a1} gives rise to the virtual cycle
$$
\left[\overline{\mathcal{M}}_{g,n}(E/B,\beta)\right]^{\mathrm{vir}}\,=\,
[\overline{\mathcal{M}}^{\mathrm{Fib}}_{g,n}(E/B,\widetilde{\beta})]^{\mathrm{vir}}
\,\in\, A_{\mathrm{vd}}(\overline{\mathcal{M}}^{\mathrm{Fib}}_{g,n}(E/B,\widetilde{\beta}))\otimes_{\mathbb{Z}}\mathbb{Q}.
$$

\subsection{Gromov-Witten invariants}

Throughout, we will assume $F$ is a non-singular projective variety, and $E/B$ is a $F$-fiber bundle with 
non-singular base $B$ and structure group $G$ as defined in \eqref{the structure group_1}. We have the 
Poincar\'{e} duality isomorphism
$$H^{2i}(E,\,\mathbb{Q}) \,\xrightarrow{\,\,\simeq\,\,}\, H_{2\dim E-2i}(E,\,\mathbb{Q}),\, \ \, 
\alpha \,\longmapsto\, \alpha \cap [E]. $$ Given $\beta$ an effective $1$-cycle on $F$, let 
$\widetilde{\beta}$ be the global section of $R_2\pi_* \underline{\mathbb{Z}}$ (cf. \eqref{R_2}) corresponding 
to $\beta$. When $B\,=\,\mathrm{Spec}(\mathbb{C})$, given $n$ cycles $\gamma_1,\, \ldots,\, \gamma_n
\,\in\, H^*(E)$, the 
ordinary Gromov-Witten invariant is defined by integrating the product $\prod_{i=1}^n 
\ev_i^*(\gamma_i)$ over the virtual class $[\overline{\mathcal{M}}_{g,n}(E/B,\beta)]^{\mathrm{vir}} 
\,=\,[\overline{\mathcal{M}}_{g,n}(F,\beta)]^{\mathrm{vir}}$.
The following definition extends the above.

\begin{defn}\label{GW definition}
Given cohomology classes $\gamma_1,\, \ldots,\, \gamma_n\,\in\, H^*(E,\,\mathbb{Q})$, define the \textit{fiberwise Gromov-Witten} \textnormal{(fiberwise GW for short)} invariant
$$
\langle \gamma_1, \ldots, \gamma_n \rangle^{\mathrm{Fib}}_{g,n, \beta}\ :=\
\int_{[\overline{\mathcal{M}}_{g,n}(E/B,\beta)]^{\mathrm{vir}}} \prod_{i=1}^n \ev_i^*(\gamma_i).
$$
\end{defn}

Note that $\langle \gamma_1, \ldots, \gamma_n \rangle^{\mathrm{Fib}}_{g,n, \beta}$ vanishes unless
$\sum\limits_ {i=1}^n \text{codim}(\gamma_i)\,=\, \mathrm{vd}$, where $\mathrm{vd}$ is given in \eqref{vdim_1}.

\subsection{Axioms of GW-invariants}

We now restate a few relations between fiberwise GW invariants, which are known to be true 
for the ordinary GW invariants. Their proofs are modifications of the proof in the ordinary case (the 
proofs in the ordinary case may be found in \cite{Cox.Katz, FuPa,K.M}).

First, consider the string axiom.
Roughly speaking, the string axiom asserts that if there is no condition in 
any of the marked point, the ordinary GW invariants vanish. In our set-up, we have the following:

\begin{property}[{String axiom}]\label{sa}
Assume that either $n \,>\,3$ or $\beta \,\neq\, 0$. Let $\gamma_1,\, \cdots, \,\gamma_n$ be cohomology classes
of $E$ such that $\gamma_n\,=\, \pi^*(\delta_n)$ for some class $\delta_n$ of $B$.
Then the fiberwise GW invariant $\langle \gamma_1,\, \ldots,\, \gamma_n \rangle^{\mathrm{Fib}}_{g,n, \beta}$ vanishes.
\end{property}

\begin{proof}
Since the evaluation maps in \eqref{evaluation map} are maps between fiber bundles, the composition $\widetilde{\pi}$ of 
the evaluation map with the projection map $\pi\,:\,E\,\longrightarrow\, B$ is the natural projection map 
$$\widetilde{\pi}\,:\,\overline{\mathcal{M}}_{g,n}(E/B,\beta)\,\longrightarrow\, B.$$ Using this, the cycle 
$\ev_n^*(\gamma_n)$ can be written as $\widetilde{\pi}^*(\delta_n)$. Since the projection $\widetilde{\pi}$ 
factors through $\overline{\mathcal{M}}_{g,n-1}(E/B,\beta)$, the product $\ev_1^*(\gamma_1)\cdot \cdots 
\ev_n^*(\gamma_n)$ on $\overline{\mathcal{M}}_{g,n}(E/B,\beta)$ is the pullback of $\ev_1^*(\gamma_1)\cdot 
\cdots \ev_{n-1}^*(\gamma_{n-1}) \cdot \widetilde{\pi}^*(\delta_n)$ on 
$\overline{\mathcal{M}}_{g,n-1}(E/B,\beta)$ via the map $$\pi_n \,:\, 
\overline{\mathcal{M}}_{g,n}(E/B,\beta)\,\longrightarrow\, \overline{\mathcal{M}}_{g,n-1}(E/B,\beta)$$ that 
forgets the last marked point (here the notation $\widetilde{\pi}$ is abused). Therefore, once we have 
\begin{equation}\label{a2}
(\pi_n)_*[\overline{\mathcal{M}}_{g,n}(E/B,\beta)]^{\mathrm{vir}}\,=\,0,
\end{equation}
we get the vanishing 
$$\int_{[\overline{\mathcal{M}}_{g,n}(E/B,\beta)]^{\mathrm{vir}}} \prod\limits_{i=1}^n \ev_i^*(\gamma_i)\,=\,0,$$
by the projection formula.

One way of proving \eqref{a2} is to use Manolache's virtual pushforward theorem \cite{Manolache}. Denoting
by $\mathbb{E}_n$ and $\mathbb{E}_{n-1}$ the perfect obstruction theory of
$\overline{\mathcal{M}}_{g,n}(E/B,\beta)$ and $\overline{\mathcal{M}}_{g,n-1}(E/B,\beta)$ respectively, we have an exact sequence
$$
\pi_n^*\mathbb{E}_{n-1}\,\longrightarrow\, \mathbb{E}_n\,\longrightarrow\, (R(\pi_n)_*\Omega_{\pi_n}^\vee)^\vee.
$$
Here we use the fact that $\pi_n \,:\, \overline{\mathcal{M}}_{g,n}(E/B,\beta)
\,\longrightarrow\, \overline{\mathcal{M}}_{g,n-1}(E/B,\beta)$ is the universal curve over $\overline{\mathcal{M}}_{g,n-1}(E/B,\beta)$. Since $\pi_n$ is smooth except the locus of the relative dimension $0$ over $\overline{\mathcal{M}}_{g,n-1}(E/B,\beta)$, the complex $(R(\pi_n)_*\Omega_{\pi_n}^\vee)^\vee$ is a perfect obstruction theory of $\pi_n$ so that we can apply Manolache's virtual pushforward theorem.
\end{proof}

\begin{rem}
Note that if $B\,=\,\mathrm{Spec}(\mathbb{C})$, then the only cycle on $B$ is $1$, in
which case, we recover the ordinary string axiom.
\end{rem}

\begin{rem}
The above proof of Property \ref{sa} suggests that we can get rid of the map $\pi_n$ which has positive 
dimensional fibers only if we avoid pulling back the cycles coming from the base $B$ through the evaluation 
maps. However it is natural to consider the evaluation of cycles involving ones coming from $B$. We can do that 
and avoid the complexity occurring in $\pi_n$ by directly pulling back cycles of $B$ through the bundle map 
$\widetilde{\pi}\,:\, \overline{\mathcal{M}}_{g,n}(E/B,\beta) \,\longrightarrow\, B$. This suggests the following 
modification of \Cref{GW definition}:
  
Given cohomology classes $\gamma_1,\, \ldots,\, \gamma_n\,\in\, H^*(E,\,\mathbb{Q})$ and
$\delta \,\in\, H^*(B,\, \mathbb{Q})$, we define the fiberwise Gromov-Witten invariant
$$
\langle \delta,\, \gamma_1,\, \ldots,\, \gamma_n \rangle^{\mathrm{Fib}}_{g,n, \beta}
\,:=\, \int_{[\overline{\mathcal{M}}_{g,n}(E/B,\beta)]^{\mathrm{vir}}} \widetilde{\pi}^* (\delta) \prod_{i=1}^n \ev_i^*(\gamma_i).
$$ 
But notice that $\langle \delta,\, \gamma_1,\, \ldots,\, \gamma_n \rangle^{\mathrm{Fib}}_{g,n, \beta}
\,=\, \langle  \pi^*(\delta)\cdot \gamma_1, \ldots, \gamma_n\rangle^{\mathrm{Fib}}_{g, n,\beta}.$ In fact, we
can attach $\pi^*\delta$ at any marked point and the value of the integral will be the same. 
\end{rem}

Now consider the divisor axiom. In the ordinary case, it states that any GW invariant that has the 
class of a divisor at one of the marked points is determined by the cycles at other marked points.

\begin{property}[{Divisor axiom}]\label{da}
Assume that either $n \,>\,3$ or $\beta \,\neq\, 0$. Let $D$ be a divisor on $F$, and let
$\gamma_1,\, \cdots,\, \gamma_{n+1}$ be cycles of $E$ such that the pullback of $\gamma_{n+1}$ to any
fiber is $D$ (via some isomorphism $F\,\cong\, E_b$ in the given class; recall
that the structure group of the fiber bundle $\pi$ is $G$). Then given a cycle $\delta$ on $B$,
$$
\langle \delta, \gamma_1, \ldots, \gamma_{n+1} \rangle^{\mathrm{Fib}}_{g,n+1, \beta}
\ =\ \left(\int_{\beta}D \right) \cdot \langle \delta, \gamma_1, \ldots, \gamma_n \rangle^{\mathrm{Fib}}_{g,n, \beta}.
$$
\end{property}

\begin{proof}
Using the decomposition of the cycle $$[\overline{\mathcal{M}}_{g,n}(E/B, \beta)]^{\mathrm{vir}}\,=\,
\sum_i\alpha_i$$ into irreducible components, Manolache's virtual pushforward theorem tells us that
$$
(\pi_{n+1})_*\left(\ev_{n+1}^*(\gamma_{n+1})\cap [\overline{\mathcal{M}}_{g,n+1}(E/B,\beta)]^{\mathrm{vir}}\right)
\, =\, \sum_i c_i \cdot \alpha_i 
$$
for some constants $c_i\,\in\,\mathbb{Q}$. Using the Gysin pullback to the inclusion map
of any (smooth) point $\{b\}\, \hookrightarrow\, B$, it recovers the usual divisor axiom, which implies
that $c_i\,=\, \int_\beta D$ for all $i$.
\end{proof}

Next, we move towards the computation of GW invariants when $\beta\,=\,0$. In this case, the moduli space
$\overline{\mathcal{M}}_{g,n}(E/B,\beta)$ is isomorphic to $\overline{\mathcal{M}}_{g,n} \times E$. The
genus zero case is straightforward. Notice that when $g \,=\, 0$, we have
 \[ [\overline{\mathcal{M}}_{0,n}(E/B,0)]^{\mathrm{vir}}\ =\ [\overline{\mathcal{M}}_{0,n}\times E].  \]
Therefore, the point mapping axiom in this case reduces to the following:
given cycles $\gamma_1,\, \ldots,\, \gamma_{n}$ on $E$, we have
\[
\langle  \gamma_1, \ldots, \gamma_n \rangle^{\mathrm{Fib}}_{0,n,0}\,=\,
\int_{\overline{\mathcal{M}}_{0,n}}1 \cdot \int_E \gamma_1 \cdots \gamma_n\,=\,
\begin{cases}
\int_E \gamma_1 \cdot \gamma_2 \cdot \gamma_3, & \text{ if } n=3,\\
0, & \text{ otherwise.}
\end{cases}\]

When $g \,\geq\, 1$, the obstruction sheaf $R^1\pi_*f^*T_{E/B}$ is isomorphic to $R^1\pi_* \mathcal{O}
\boxtimes T_{E/B}$. The latter is isomorphic to the vector bundle $\mathcal{H}^{\vee}\boxtimes T_{E/B}$,
of rank $e\,:=\, g \cdot \dim F$, where $\mathcal{H}$ denotes the Hodge bundle $\pi_* \omega$. Consequently, 
$$[\overline{\mathcal{M}}_{g,n}(E/B,0)]^{\mathrm{vir}}\ =\
c_e(\mathcal{H}^{\vee}\boxtimes T_{E/B})\cdot [\overline{\mathcal{M}}_{g,n}\times E].$$
Next, we will compute degree zero GW invariants for the case when $g \,=\, 1$. Higher genera cases can be computed similarly.

\begin{property}[{Point mapping axiom}] \label{pma}
Given cycles $\gamma_1,\, \ldots,\, \gamma_{n}$ of $E$, we have
\[
\langle  \gamma_1, \ldots, \gamma_n \rangle^{\mathrm{Fib}}_{1,n,0}\,=\,\,
\begin{cases}
-\dfrac{1}{24}\int_E c_{\dim F -1}(T_{E/B}) \cdot \gamma_1, & \text{ if } g\,=\,1 \text{ and }n\,=\,1,\\
~~ 0, & \text{ otherwise.}
\end{cases}
\]
\end{property}

\begin{proof}
When $g\,=\,1$, we have 
$$c_e(\mathcal{H}^{\vee}\boxtimes T_{E/B})\ =\ 1\otimes f^*c_e(T_{E/B})-\lambda_1 \otimes f^*c_{e-1}(T_{E/B}),$$
where $\lambda_1\,=\,e(\mathcal{H})$. 
Note that $\int_{\overline{\mathcal{M}}_{1,n}} 1 \,=\, 0$ for all $n \,\geq\, 1$. Therefore,
$$\langle  \gamma_1,\, \ldots,\, \gamma_n \rangle^{\mathrm{Fib}}_{1,n,0}
\,=\, -\int_{\overline{\mathcal{M}}_{1,n}}\lambda_1 \cdot \int_E c_{e-1}(T_{E/B}) \cdot \gamma_1 \cdots \gamma_n.$$
Similarly, due to the dimensional reason $\int_{\overline{\mathcal{M}}_{1,n}}\lambda_1$ is non-zero only
if $n\,=\,1$, and in that case the value of the integral is $1/24$, as follows from the proof of
\cite[Proposition 2.7]{Getzler}.
\end{proof}

Now we discuss a boundary divisor of $\overline{\mathcal{M}}_{g,n}(E/B,\beta)$. In the ordinary case, the
boundary divisor of $\overline{\mathcal{M}}_{g,n}(F, \beta)$ decomposes as
$$
D(g_1,P,\beta_1; g_2,Q, \beta_2)\ :=\ \overline{\mathcal{M}}_{g_1, P \cup \{ \bullet\}}(F, \beta_1)
\times_F \overline{\mathcal{M}}_{g_2, Q \cup \{ \bullet\}}(F, \beta_2),
$$ 
where $g_1+g_2\,=\,g$, $P \sqcup Q$ is a partition of $\{1,\, \ldots ,\,n\}$ and $\beta_1+\beta_2
\,=\,\beta$ (see \cite[Section 6]{FuPa} for more details). Similarly given $g_1,\,g_2$ with $g_1+g_2
\,=\,g$, a partition $\{ P,\, Q\}$ of $\{1,\, \ldots ,\,n\}$ and $\beta_1,\, \beta_2$ with $\beta_1+\beta_2
\,=\,\beta$, we define a divisor
\begin{equation}\label{divisor}
 D^{\mathrm{Fib}}(g_1,P,\beta_1; g_2,Q,\beta_2)\,:=\,
\overline{\mathcal{M}}_{g_1, P \cup \{ \bullet\}}(E/B, \beta_1) \times_{E}
\overline{\mathcal{M}}_{g_2, Q \cup \{ \bullet\}}(E/B, \beta_2)
\end{equation}
of $\overline{\mathcal{M}}_{g,n}(E/B,\beta)$. When it is nonzero, its virtual cycle is defined to be the
pullback of the virtual cycle $[\overline{\mathcal{M}}_{g,n}(E/B,\beta)]^{\mathrm{vir}}$
$$
[D^{\mathrm{Fib}}(g_1,P,\beta_1; g_2,Q,\beta_2)]^{\mathrm{vir}}\,:=\,
i^![\overline{\mathcal{M}}_{g,n}(E/B,\beta)]^{\mathrm{vir}},
$$
where $i\,:\, D^{\mathrm{Fib}}(g_1,P,\beta_1; g_2,Q,\beta_2)\,\hookrightarrow\,
\overline{\mathcal{M}}_{g,n}(E/B,\beta)$ is the embedding. The splitting axiom determines the evaluation of
cycles on the boundary divisors of this form.

To state the splitting property, we need the expression of the diagonal class of $E$
as a cycle. Let $\{T_0\,=\,1, T_1, \,\ldots ,\,T_m \}$ be a basis of $H^*(E,\mathbb{Q})$. Define
$$g_{ij}\:=\ \int_E T_i \cup T_j.$$Then the class of the diagonal $\Delta_E$ of $E$ inside $E \times E$ is given by 
$$[\Delta_E]\ =\ \sum_{i,j}g^{ij}T_i \otimes T_j \in H^*(E \times E, \mathbb{Q}),$$
where the matrix $\left(g^{ij}\right)$ is the inverse of $\left(g_{ij}\right)$.  

\begin{property}[{Splitting axiom}]\label{Sa}
Consider the divisor $D^{\mathrm{Fib}}(g_1,P,\beta_1; g_2,Q,\beta_2)$ defined as in \eqref{divisor}.
Given cycles $\gamma_1,\, \ldots,\, \gamma_{n}$ on $E$, the following holds:
\begin{align*}
&\int_{[D^{\mathrm{Fib}}(g_1,P,\beta_1; g_2,Q,\beta_2)]^{\mathrm{vir}}} \prod_{i=1}^n \ev_i^*(\gamma_i) \\
&\qquad\qquad\qquad = \,\, \sum_{i,j}g^{ij} \langle\{ \gamma_i: i \in P\}, T_i \rangle^{\mathrm{Fib}}_{g_1,|P|+1, \beta_1} \cdot \langle\{ \gamma_i: i \in Q\}, T_j\rangle^{\mathrm{Fib}}_{g_2, |Q|+1, \beta_2}.
\end{align*}
\end{property}

\begin{proof}
The idea of the proof is the same as in the ordinary case. For notational simplicity, we denote the moduli spaces 
$$
D^{\mathrm{Fib}}(g_1,P,\beta_1; g_2,Q,\beta_2),\ \overline{\mathcal{M}}_{g_1, P \cup \{ \bullet\}}(E/B, \beta_1),\ \overline{\mathcal{M}}_{g_2, Q \cup \{ \bullet\}}(E/B,\beta_2) \ \ \text{and}\ \ \overline{\mathcal{M}}_{g,n}(E/B,\beta)
$$ 
by $D$, $M_P$, $M_Q$ and $M$, respectively. Consider the fiber diagram
\begin{equation}\label{DIA}
\begin{gathered}
\xymatrix{
D\,=\,M_P \times_E M_Q \ar[rr]^{\iota} \ar[d]_{\mathrm{ev}} && M_P \times M_Q \ar[d]^{\mathrm{ev}^{\prime}} \\
E^n \times E  \ar[rr]_{d} && E^n \times E \times E.
}
\end{gathered}
\end{equation} 
Here $d$ is defined by the diagonal embedding of $E$ inside $E \times E$; the map $\mathrm{ev}$ is the
evaluations at the marked points and the node, and $\mathrm{ev}^{\prime}$ is given by all the $n+2$ evaluations. 

Since $d$ is regular, the Gysin pullback class $d^!([M_P]^{\mathrm{vir}}\times [M_Q]^{\mathrm{vir}})$, which is
a Chow cycle on $D$, is defined.

We will first prove the following equality: 
\begin{align}\label{D=MPMQ}
[D]^{\mathrm{vir}}\ =\ d^!\left([M_P]^{\mathrm{vir}}\times [M_Q]^{\mathrm{vir}}\right),
\end{align}
and explain how it is used in proving the splitting axiom later.

To prove the \eqref{D=MPMQ}, we study the perfect obstruction theories defining both sides. From the embeddings 
\begin{align} \label{embs}
i\,:\,D\,\hookrightarrow\, M\quad \text{ and }\quad \iota\,:\,D\,\hookrightarrow\, M_P \times M_Q,
\end{align}
we obtain exact triangles (in the derived category of $D$) of the perfect obstruction theories
\begin{align}\label{exactt1}
\mathbb{L}_{i}[-1] \,\longrightarrow\, \mathbb{E}_{M}\big\vert_D \,\longrightarrow\, \mathbb{E}_1\quad
\text{ and }\quad \mathbb{L}_{d}[-1]\big\vert_D \,\longrightarrow\, (\mathbb{E}_{M_P}
\boxplus \mathbb{E}_{M_Q})\big\vert_D\,\longrightarrow\, \mathbb{E}_2
\end{align}
which are lifts of the exact triangles of cotangent complexes
$$
\mathbb{L}_{i}[-1] \,\longrightarrow\, \mathbb{L}_{M}\big\vert_D\, \longrightarrow\, \mathbb{L}_D \quad
\text{ and }\quad \mathbb{L}_{\iota}[-1] \,\longrightarrow\, \mathbb{L}_{M_P\times M_Q}\big\vert_D
\,\longrightarrow\, \mathbb{L}_D.
$$ 
For now, we may understand both $\mathbb{E}_1$ and $\mathbb{E}_2$ as complexes sitting in the exact triangles 
in \eqref{exactt1}. We will give their explicit description soon. Then the virtual pullbacks of virtual 
fundamental classes (Kim-Kresch-Pantev (\cite{Kim-Kresch-Pantev}) and Manolache (\cite{Manolache})) applied to 
the embeddings \eqref{embs} and the triangles \eqref{exactt1} tell us that the virtual cycles defined by the 
perfect obstruction theories $\mathbb{E}_1\,\longrightarrow\, \mathbb{L}_D$ and $\mathbb{E}_2
\,\longrightarrow\,  \mathbb{L}_D$ are 
$i^![M]^{\mathrm{vir}}\,=\,[D]^{\mathrm{vir}}$ and $d^!([M_P]^{\mathrm{vir}}\times [M_Q]^{\mathrm{vir}})$ 
respectively. Hence \eqref{D=MPMQ} follows from $\mathbb{E}_1\,\cong\, \mathbb{E}_2$\footnote{In view 
of the work of Siebert (\cite[Theorem 4.6]{Siebert}) we don't have to show that this quasi-isomorphism commutes
with the morphisms to $\mathbb{L}_D$.}.

Our strategy is to show both $\mathbb{E}_1$ and $\mathbb{E}_2$ are quasi-isomorphic to the mapping cone
\begin{align}\label{COne}
\mathrm{Cone}\left((R\pi_*f^*T_{E/B})^\vee[-1]\to \mathbb{L}_{\mathfrak{m}_{0,P}\times\mathfrak{m}_{0,Q}}\big\vert_D\right)[-1]\ \longrightarrow\ \mathbb{L}_{B}\big\vert_D,
\end{align}
where $\pi\,:\,C\,\longrightarrow\,  D$ is the universal curve and $f\,:\,C\,\longrightarrow\, E$ is the
universal map.

We will prove that $\mathbb{E}_1\,\cong\,\mathrm{Cone}\eqref{COne}$. We claim that this
is implied by a quasi-isomorphism of complexes
\begin{align}\label{cone=cone}
\mathrm{Cone}\left(\mathbb{L}_{i}[-1]\to \mathbb{E}_{M/B}\big\vert_D\right)\,\cong\, \mathrm{Cone}\left((R\pi_*f^*T_{E/B})^\vee[-1]\to \mathbb{L}_{\mathfrak{m}_{0,P}\times\mathfrak{m}_{0,Q}}\big\vert_D\right).
\end{align}
To prove the claim, note that from \eqref{cone=cone} we obtain two exact triangles
$$
\mathbb{L}_B\big\vert_D \,\longrightarrow\, \mathbb{E}_1 \,\longrightarrow\,
\text{LHS of }\eqref{cone=cone} \quad\text{and}\quad \mathbb{L}_B\big\vert_D \,\longrightarrow\,
 \mathrm{Cone}\eqref{COne}\,\longrightarrow\, \text{RHS of }\eqref{cone=cone}
$$
as follows. The first one is obtained by the following diagram of exact triangles
$$
\xymatrix@C=8mm{
& \mathbb{L}_i[-1]\ar@{=}[r] \ar[d] & \mathbb{L}_i[-1] \ar[d]\\
\mathbb{L}_B\big\vert_D \ar@{=}[d]\ar[r] & \mathbb{E}_{M}\big\vert_D \ar[d] \ar[r] & \mathbb{E}_{M/B}\big\vert_D \ar[d] \\
\mathbb{L}_B\big\vert_D \ar[r] & \mathbb{E}_1 \ar[r]& \text{LHS of }\eqref{cone=cone},
}
$$
and the second one is obvious. Comparing the two triangles we
conclude that having $\mathbb{E}_1\,\cong\, \mathrm{Cone}\eqref{COne}$ is equivalent to having
the quasi-isomorphism in \eqref{cone=cone}. This proves the claim.

So we will prove \eqref{cone=cone}. It follows from the following facts:
\begin{itemize}
\item $\mathbb{L}_i\,\cong\, N^*_{\mathfrak{m}_{g_1,P}\times\mathfrak{m}_{g_2,Q}/ \mathfrak{m}_{g,n}}\big\vert_D[1]$,

\item $\mathbb{E}_{M/B}\big\vert_D\,\cong\, \mathrm{Cone}((R\pi_*f^*T_{E/B})^\vee[-1] \to \mathbb{L}_{\mathfrak{m}_{g,n}}\big\vert_D)$,
and

\item $\mathbb{L}_{\mathfrak{m}_{g,n}}\big\vert_D\,\longrightarrow\,
\mathbb{L}_{\mathfrak{m}_{g_1,P}\times \mathfrak{m}_{g_2,Q}}\big\vert_D\,\longrightarrow\,
N^*_{\mathfrak{m}_{g_1,P}\times\mathfrak{m}_{g_2,Q}/ \mathfrak{m}_{g,n}}\big\vert_D[1]$ is an exact triangle.
\end{itemize}
Combining all these, the quasi-isomorphism \eqref{cone=cone} is obtained at the bottom-right corner
of the following diagram of exact triangles:
$$
\xymatrix{
& \mathbb{L}_i[-1]\ar@{=}[r] \ar[d] & \mathbb{L}_i[-1] \ar[d]\\
(R\pi_*f^*T_{E/B})^\vee[-1] \ar@{=}[d]\ar[r] & \mathbb{L}_{\mathfrak{m}_{g,n}}\big\vert_D \ar[d] \ar[r] &
\mathbb{E}_{M/B}\big\vert_D \ar[d] \\
(R\pi_*f^*T_{E/B})^\vee[-1] \ar[r] & \mathbb{L}_{\mathfrak{m}_{g_1,P}\times \mathfrak{m}_{g_2,Q}}\big\vert_D
\ar[r]& \eqref{cone=cone}.
}
$$

Next we claim that $\mathbb{E}_2\,\cong\,\mathrm{Cone} \eqref{COne}$. Letting $(\mathbb{E}_{M_P}\boxplus_B
\mathbb{E}_{M_Q})\big\vert_D$ denote the cone of $\mathbb{L}_B\big\vert_D\,\longrightarrow\,
(\mathbb{E}_{M_P}\boxplus \mathbb{E}_{M_Q})\big\vert_D$, we
conclude that $\mathbb{E}_2\,\cong\,\mathrm{Cone} \eqref{COne}$ follows from the
following quasi-isomorphism of complexes
\begin{align}\label{Cone=Cone}
\mathrm{Cone}\left(\mathbb{L}_{d}[-1]\big\vert_D\to (\mathbb{E}_{M_P}\boxplus_B \mathbb{E}_{M_Q})\big\vert_D\right)
\,\cong\, \mathrm{Cone}\left((R\pi_*f^*T_{E/B})^\vee[-1]\to
\mathbb{L}_{\mathfrak{m}_{g_1,P}\times\mathfrak{m}_{g_2,Q}}\big\vert_D\right).
\end{align}
As before it follows from a comparison of the exact triangles
$$
\mathbb{L}_B\big\vert_D \,\longrightarrow\, \mathbb{E}_2 \longrightarrow \text{LHS of }\eqref{Cone=Cone} 
\quad\text{and}\quad \mathbb{L}_B\big\vert_D \,\longrightarrow\, \mathrm{Cone}\eqref{COne}
\,\longrightarrow\, \text{RHS of }\eqref{Cone=Cone}.
$$
The left one is obtained by the following diagram of exact triangles 
$$
\xymatrix@C=4.5mm{
& \mathbb{L}_d[-1]\big\vert_D\ar@{=}[r]\ar[d]  & \mathbb{L}_{d}[-1]\big\vert_D\ar[d] && \\  
\mathbb{L}_B\big\vert_D \ar@{=}[d]\ar[r]& (\mathbb{E}_{M_P}\boxplus\mathbb{E}_{M_Q})\big\vert_D \ar[d] \ar[r] & (\mathbb{E}_{M_P}\boxplus_B \mathbb{E}_{M_Q})\big\vert_D \ar[d] &&\\
\mathbb{L}_B\big\vert_D \ar[r] & \mathbb{E}_2 \ar[r]& \text{LHS of }\eqref{Cone=Cone}. 
}
$$
The right triangle is obvious. So we will prove that \eqref{Cone=Cone} holds.
It follows from the following facts:
\begin{itemize}
\item $\mathbb{L}_d[-1]\,\cong\, \mathbb{L}_E$,

\item $\mathbb{L}_{E/B}\big\vert_D\,\longrightarrow\, R\pi_*(f^*_1T_{E/B}\oplus f^*_2T_{E/B})^\vee
\,\longrightarrow\, (R\pi_*f^*T_{E/B})^\vee$ is an exact triangle, and

\item $\mathbb{L}_{B\times\mathfrak{m}_{g_1,P}\times\mathfrak{m}_{g_2,Q}}\big\vert_D\,\longrightarrow\,
(\mathbb{E}_{M_P}\boxplus_B \mathbb{E}_{M_Q})\big\vert_D
\,\longrightarrow\, R\pi_*(f^*_1T_{E/B}\oplus f^*_2T_{E/B})^\vee$ is also an exact triangle.
\end{itemize}
From these, the quasi-isomorphism \eqref{Cone=Cone} is obtained at the bottom-right corner of the following diagram of exact triangles
$$
\xymatrix{
\mathbb{L}_{E/B}[-1]\ar[r]\ar[d] & \mathbb{L}_{B}\big\vert_D\ar[r] \ar[d] & \mathbb{L}_d[-1]\big\vert_D \ar[d]\\
R\pi_*(f^*_1T_{E/B}\oplus f^*_2T_{E/B})^\vee[-1] \ar[d]\ar[r] &
\mathbb{L}_{B\times\mathfrak{m}_{g_1,P}\times\mathfrak{m}_{g_2,Q}}\big\vert_D \ar[d] \ar[r] &
(\mathbb{E}_{M_P}\boxplus_B \mathbb{E}_{M_Q})\big\vert_D \ar[d] \\
(R\pi_*f^*T_{E/B})^\vee[-1] \ar[r] & \mathbb{L}_{\mathfrak{m}_{g_1,P}\times
\mathfrak{m}_{g_2,Q}}\big\vert_D \ar[r]& \eqref{Cone=Cone}.
}
$$
Thus we conclude that $\mathbb{E}_1\,\cong\,\mathrm{Cone}\eqref{COne}
\,\cong\,\mathbb{E}_2$, which implies that \eqref{D=MPMQ} holds.

Now we claim that the splitting axiom follows from \eqref{D=MPMQ}. In view of the diagram \eqref{DIA},
the splitting axiom follows from
\begin{align}\label{haha}
\mathrm{ev}'_*\iota_*[D]^{\mathrm{vir}}&\,=\,
\mathrm{ev}'_*\left(\mathrm{ev}'^*[\Delta_E]\cap \left([M_P]^{\mathrm{vir}}\times [M_Q]^{\mathrm{vir}}\right)\right). 
\end{align}
The left-hand side equals $d_*\mathrm{ev}_*[D]^{\mathrm{vir}}$ by the functoriality of the proper pushforwards and the right-hand side equals 
\begin{align*}
[\Delta_E]\cap \mathrm{ev}'_*\left([M_P]^{\mathrm{vir}}\times [M_Q]^{\mathrm{vir}}\right)&\,=\,d_*d^!\mathrm{ev}'_*\left([M_P]^{\mathrm{vir}}\times [M_Q]^{\mathrm{vir}}\right)\\
&\,=\,d_*\mathrm{ev}_*d^!\left([M_P]^{\mathrm{vir}}\times [M_Q]^{\mathrm{vir}}\right)
\end{align*}
by the projection formula, the fact that $[\Delta_E]\,=\,d_*1\,=\,d_*d^!1$, the commutativity of the proper pushforwards and Gysin pullbacks. Hence \eqref{D=MPMQ} induces \eqref{haha}, and the claim is proved.
\end{proof}

\section{Quantum Cohomology}\label{quantum cohomology}

Throughout this section, we will work only with the $g\,=\,0$ situation. In the ordinary case, the quantum cohomology 
defines the ``quantum'' product on the cohomology with coefficients in the Novikov ring. The main point here is 
to show the associativity, which is equivalent to the WDVV equation \cite[Theorem 8.2.4]{Cox.Katz}. On the plane 
$\mathbb{P}^2$, the WDVV equation yields a recursion relation for counting rational curves in $\mathbb{P}^2$ 
satisfying insertion conditions. We now define the quantum product in the current set-up and find an analogous WDVV 
equation. In the next section, we will apply the WDVV equation to count rational curves in $\mathbb{P}^3$ that 
lie inside a hyperplane in $\mathbb{P}^3$. Such curves are called rational planar curves. We will discuss this in 
detail in the next section.

First fix a basis $\{T_0\,=\,1,\,T_1,\, \ldots,\, T_m\}$ of $H^*(E,\,\mathbb{Q})$. Put $\gamma\,:= \,
\sum_{i=1}^m t_i T_i$, where $t_i$ are formal variables. We can now define the $g\,=\,0$ Gromov-Witten
potential the same way as in the ordinary case; it is defined by 
\begin{equation}\label{GW potential}
\Phi(\gamma)\,\,:=\, \sum_{n \geq 3} \sum_{\beta} \dfrac{1}{n!}  \langle \gamma^n
\rangle^{\mathrm{Fib}}_{0,n, \beta},\quad\text{where}\quad \langle \gamma^n \rangle^{\mathrm{Fib}}_{0,n, \beta}
\,=\,\langle \gamma,\ldots,\gamma \rangle^{\mathrm{Fib}}_{0,n, \beta}.
\end{equation}
Using the arguments as in the proof of \cite[Lemma 15]{FuPa} for a homogeneous variety $F$ and fixed $n$,
we conclude that $\langle \gamma^n \rangle^{\mathrm{Fib}}_{0,n, \beta}$ is non-zero only for finitely many effective classes $\beta$. Thus $\Phi(\gamma)$ is a formal power series in $\mathbb{Q}[[t]]:= \mathbb{Q}[[t_0,\ldots,t_m]]$:
\begin{equation}\label{GW potential expanded}
\Phi\left( t_0,\ldots,t_m \right)\,=\, \sum_{n_0+\cdots+n_m \geq 3}\sum_{\beta} \langle T_0^{n_0}\cdots T_m^{n_m} \rangle^{\mathrm{Fib}}_{ 0,n,\beta}\, \, \dfrac{t_0^{n_0}}{n_0!} \cdots \dfrac{t_m^{n_m}}{n_m!}.
\end{equation} 
Here $\langle T_0^{n_0}\cdots T_m^{n_m} \rangle^{\mathrm{Fib}}_{ 0,n,\beta}\,=\,\langle T_0,\,\ldots,\, T_0,\, \ldots,
\, T_m,\ldots T_m \rangle^{\mathrm{Fib}}_{ 0,n,\beta}$.
Letting $\Phi_i\,:=\,\dfrac{\partial\Phi}{\partial t_i}$, and using \eqref{GW potential expanded}, we can show that 
$$
\Phi_{ijk}\ =\  \sum_{n \geq 0} \sum_{\beta}  \dfrac{1}{n!} \langle \gamma^n\cdot T_i \cdot T_j \cdot T_k\rangle^{\mathrm{Fib}}_{ 0,n+3,\beta},
$$
where $\langle \gamma^n \cdot T_i \cdot T_j \cdot T_k\rangle^{\mathrm{Fib}}_{0,n+3,\beta}
\,=\,\langle \gamma,\,\ldots,\,\gamma,\, T_i,\, T_j,\, T_k \rangle^{\mathrm{Fib}}_{0,n+3,\beta}.$
Now define the ``\textit{quantum}'' product as follows:
$$T_i * T_j\ :=\ \sum_{e,f} \Phi_{ije}g^{ef}T_f.$$

\begin{lmm}\label{le1}
The following holds: $T_0*T_i\,=\,T_i\,=\, T_i*T_0$ for all $0 \,\leq\, i \,\leq\, m$.
\end{lmm}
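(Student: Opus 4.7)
The plan is to unfold the definition of the quantum product and reduce everything to a calculation of the third derivatives $\Phi_{0ie}$ at the origin, then apply the string axiom and the point mapping axiom.

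First I would write
\[
T_0*T_i\,=\,\sum_{e,f}\Phi_{0ie}\,g^{ef}T_f,
\]
with
\[
\Phi_{0ie}\,=\,\sum_{n\geq 0}\sum_{\beta}\frac{1}{n!}\,\langle\gamma^n\cdot T_0\cdot T_i\cdot T_e\rangle^{\mathrm{Fib}}_{0,n+3,\beta}.
\]
Since $T_0\,=\,1\,=\,\pi^*(1_B)$ is pulled back from the base $B$, Property \ref{sa} (the string axiom) applies to the marked point carrying $T_0$: the invariant $\langle\gamma^n\cdot T_0\cdot T_i\cdot T_e\rangle^{\mathrm{Fib}}_{0,n+3,\beta}$ vanishes as soon as $n+3\,>\,3$ or $\beta\,\neq\,0$. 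Hence all contributions to $\Phi_{0ie}$ drop out except the single term $n\,=\,0$, $\beta\,=\,0$.

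Next, for this remaining term I would apply the point mapping axiom (the $g\,=\,0$ case described just after Property \ref{da}), which gives
\[
\langle T_0\cdot T_i\cdot T_e\rangle^{\mathrm{Fib}}_{0,3,0}\,=\,\int_E T_0\cup T_i\cup T_e\,=\,\int_E T_i\cup T_e\,=\,g_{ie}.
\]
Substituting this back, I get
\[
T_0*T_i\,=\,\sum_{e,f}g_{ie}\,g^{ef}\,T_f\,=\,\sum_{f}\Bigl(\sum_e g_{ie}g^{ef}\Bigr)T_f\,=\,\sum_f\delta_i^{\,f}T_f\,=\,T_i,
\]
where I used that $(g^{ef})$ is the inverse matrix of $(g_{ef})$.

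Finally, the equality $T_i*T_0\,=\,T_i$ follows from the same computation because $\Phi_{ije}$ is totally symmetric in its three indices (since the GW invariants $\langle\cdots\rangle^{\mathrm{Fib}}_{0,n,\beta}$ are invariant under permutation of marked insertions), so $\Phi_{i0e}\,=\,\Phi_{0ie}\,=\,g_{ie}$ and the same manipulation applies. I do not expect any genuine obstacle: the only thing to check carefully is that $T_0$ really is a pullback from $B$ so that the string axiom is applicable, which is immediate since the constant function $1$ on $E$ equals $\pi^*(1_B)$.
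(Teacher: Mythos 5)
Your proposal is correct and follows essentially the same approach as the paper: both use the string axiom (noting $T_0 = \pi^*(1_B)$) to reduce $\Phi_{0ie}$ to the single term $\langle T_0, T_i, T_e\rangle^{\mathrm{Fib}}_{0,3,0}$, identify this with $g_{ie}$ via the $\beta = 0$ point mapping computation, and conclude by the inverse-matrix identity and symmetry of $\Phi$. Your version merely spells out the expansion of $\Phi_{0ie}$ and the appeal to the point mapping axiom that the paper leaves implicit.
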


\begin{proof}
By the string axiom we have 
$$
\Phi_{0ij}\,=\, \langle T_0 ,\, T_i ,\, T_j \rangle^{\mathrm{Fib}}_{ 0,3,0}\,=\, \int_E T_i \cdot T_j\,=\,g_{ij}.
$$
Hence
$$T_i*T_0\,=\,\sum_{e,f} \Phi_{i0e}g^{ef}T_f\,=\, \sum_{e,f} g_{ie}g^{ef}T_f\,=\,T_i.$$
The other equality is symmetric.
\end{proof}

Lemma \ref{le1} shows that $T_0$ is a unit for the $*$ product. We now move towards the associativity of the $*$ 
product. Before proving it, we will first establish the WDVV equation in the fiber bundle 
set-up. The main idea to establish the equation is to consider the natural forgetful map 
$\overline{\mathcal{M}}^{\mathrm{Fib}}_{0,n+4}(E/B,\widetilde{\beta})\,\longrightarrow\, \overline{\mathcal{M}}_{0,4} 
$ that forgets the first $n$ marked points. Now observe that in $\overline{\mathcal{M}}_{0,4}\,\cong
\,\mathbb{P}^1$, the 
two divisors $D(1\, 2|3 \, 4)$ and $D(1\, 3|2 \, 4)$ (actually points in $\mathbb{P}^1$) are linearly equivalent. 
Pulling back we conclude that the following divisors are linearly equivalent:
$$
\sum_{P,Q}  \sum_{\beta_1+\beta_2=\beta} D^{\mathrm{Fib}}(0,P,\beta_1; 0,Q, \beta_2),\quad n+1,\, n+2 \,\in
\, P, \quad  n+3,\, n+4 \,\in\, Q
$$ 
where $P,\,Q$ is a partition of $[n+4]$, and
$$
\sum_{P,Q}  \sum_{\beta_1+\beta_2=\beta} D^{\mathrm{Fib}}(0,P,\beta_1; 0,Q,\beta_2),\quad n+1,\, n+3 \,\in\,
 P, \quad  n+2,\, n+4 \,\in\, Q.
$$ 
Now, the splitting axiom determines the evaluation of a cycle on such divisors. Evaluation of the cycle $\ev^*(\gamma^n)\cdot \ev_{n+1}^*(T_i) \cdot \ev_{n+2}^*(T_j) \cdot \ev_{n+3}^*(T_k) \cdot \ev_{n+4}^*(T_l)$ on each of the divisor yields the required WDVV equation. 

\begin{thm}[{WDVV Equation}]\label{WDVV theorem}
For $i,\,j,\,k,\,l$, the potential $\Phi$ satisfies the equation
\begin{equation*}
\sum_{e,f}\Phi_{ije}g^{ef}\Phi_{fkl}\,=\, \sum_{e,f}\Phi_{ike}g^{ef}\Phi_{fjl}.
\end{equation*}
\end{thm}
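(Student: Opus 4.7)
The plan is to adapt the classical WDVV argument (cf.\ \cite[Theorem 8.2.4]{Cox.Katz}) to the fiber-bundle setting, using the splitting axiom (Property \ref{Sa}) already established. Fix indices $i,j,k,l$. For each $n \geq 0$ and each effective class $\beta$, consider the moduli space $M_{n,\beta} := \overline{\mathcal{M}}_{0,n+4}(E/B,\beta)$ together with the forgetful map $\rho \colon M_{n,\beta} \to \overline{\mathcal{M}}_{0,4}$ obtained by forgetting the first $n$ marked points and stabilizing the remaining four. Since $\overline{\mathcal{M}}_{0,4} \cong \mathbb{P}^1$, any two closed points are linearly equivalent; in particular, the two boundary points $D(\{n+1,n+2\}\,|\,\{n+3,n+4\})$ and $D(\{n+1,n+3\}\,|\,\{n+2,n+4\})$ have the same class in $A^1(\overline{\mathcal{M}}_{0,4})$.

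Pulling back along $\rho$, these two classes decompose, as indicated in the paragraph preceding the theorem, into sums of boundary divisors $D^{\mathrm{Fib}}(0,P,\beta_1;\,0,Q,\beta_2)$ of $M_{n,\beta}$ with the last four marked points distributed according to the chosen splitting. Capping both sides with the virtual cycle $[M_{n,\beta}]^{\mathrm{vir}}$ and integrating the cohomology class
\[
\alpha_n := \prod_{\ell=1}^{n} \ev_\ell^*(\gamma) \cdot \ev_{n+1}^*(T_i) \cdot \ev_{n+2}^*(T_j) \cdot \ev_{n+3}^*(T_k) \cdot \ev_{n+4}^*(T_l)
\]
gives equal numbers. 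By Property \ref{Sa}, each boundary contribution factors as a product of two family GW invariants with the gluing expanded through the diagonal $[\Delta_E] = \sum_{e,f} g^{ef} T_e \otimes T_f$.

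The final step is combinatorial reorganization. Writing $\gamma = \sum_a t_a T_a$ and expanding $\gamma^n$, the sum over partitions $P \sqcup Q$ of the first $n$ marked points between the two factors of the splitting assembles, after dividing by $n!$ and summing over $n$ and $\beta$, into the product of formal power series: the left-hand boundary sum becomes $\sum_{e,f} \Phi_{ije}\,g^{ef}\,\Phi_{fkl}$, while the right-hand sum becomes $\sum_{e,f} \Phi_{ike}\,g^{ef}\,\Phi_{fjl}$, where $\Phi_{abc}$ is computed from \eqref{GW potential expanded}. The linear equivalence on $\overline{\mathcal{M}}_{0,4}$ then forces the two sides to be equal, which is the claimed WDVV identity.

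The main obstacle will be verifying two technical points: (a) that the pullback along $\rho$ of the linear equivalence on $\overline{\mathcal{M}}_{0,4}$ really splits into the indicated sum of $D^{\mathrm{Fib}}$-divisors --- here one must track how $(n+4)$-pointed stable maps degenerate when the last four marked points collide, and use that the structure-group hypothesis makes the class decomposition $\beta = \beta_1 + \beta_2$ meaningful globally over $B$ via the section $\widetilde{\beta}$ of $R_2\pi_*\underline{\mathbb{Z}}$; and (b) that intersecting this rational equivalence with $[M_{n,\beta}]^{\mathrm{vir}}$ preserves equality, which follows from $[D^{\mathrm{Fib}}]^{\mathrm{vir}} = i^![M_{n,\beta}]^{\mathrm{vir}}$ (the identity \eqref{D=MPMQ} established inside the proof of the splitting axiom). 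Once these are in place, the combinatorial repackaging is entirely parallel to the classical argument.
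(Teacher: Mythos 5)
Your proposal follows exactly the outline the paper gives in the paragraph preceding Theorem \ref{WDVV theorem} (forgetful map to $\overline{\mathcal{M}}_{0,4}$, linear equivalence of the two boundary points, pullback decomposition into $D^{\mathrm{Fib}}$-divisors, splitting axiom, combinatorial reassembly), and the two technical points you flag are precisely the ones the paper resolves via $[D^{\mathrm{Fib}}]^{\mathrm{vir}}=i^![M]^{\mathrm{vir}}$ and the structure-group hypothesis. The paper itself omits the detailed verification and defers to Cox--Katz, so your sketch is as complete as the paper's own.
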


We omit the detailed proof. Instead we refer \cite[Theorem 8.2.4]{Cox.Katz}.

Let us now prove the associativity of $*$ as an application of WDVV equation.

\begin{thm}
For $1 \,\leq\, i,\,j,\,k \,\leq\, m$, $$T_i*(T_j * T_k)\ =\ (T_i *T_j)*T_k.$$
Thus $H^*(E,\,\mathbb{Q})\otimes_{\mathbb{Q}}\mathbb{Q}[[t]]$ is a commutative ring with identity $T_0$.
\end{thm}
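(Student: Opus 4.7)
The plan is to deduce associativity directly from the WDVV equation already established in Theorem \ref{WDVV theorem}, combined with the full $S_3$-symmetry of the third partial derivatives $\Phi_{ijk}$ (which holds by the commutativity of partial derivatives, and ultimately reflects the symmetry of the Gromov-Witten invariants in their insertions). Commutativity and the identity claim will then follow with very little extra work.

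First I would expand both sides of the desired identity $(T_i*T_j)*T_k = T_i*(T_j*T_k)$ using the definition $T_a*T_b := \sum_{e,f}\Phi_{abe}g^{ef}T_f$. This yields
\begin{align*}
(T_i*T_j)*T_k &= \sum_{e,f,a,b} \Phi_{ije}\, g^{ef}\, \Phi_{fka}\, g^{ab}\, T_b, \\
T_i*(T_j*T_k) &= \sum_{e,f,a,b} \Phi_{jke}\, g^{ef}\, \Phi_{ifa}\, g^{ab}\, T_b.
\end{align*}
Reading off coefficients of $T_b$, and using that $(g^{ab})$ is invertible, reduces associativity to the identity
\[
\sum_{e,f} \Phi_{ije}\, g^{ef}\, \Phi_{fka} \;=\; \sum_{e,f} \Phi_{jke}\, g^{ef}\, \Phi_{ifa}
\]
for every choice of indices $i,j,k,a$.

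The key manipulation is then the following. Using the symmetry $\Phi_{ifa}=\Phi_{fia}$, the right-hand side becomes $\sum_{e,f}\Phi_{jke}\, g^{ef}\, \Phi_{fia}$. To this I would apply the WDVV equation with the quadruple of indices $(j,k,i,a)$, which rewrites it as $\sum_{e,f}\Phi_{jie}\, g^{ef}\, \Phi_{fka}$. Using $\Phi_{jie}=\Phi_{ije}$ once more gives exactly the left-hand side, so associativity holds. Commutativity is then immediate: $T_i*T_j=\sum_{e,f}\Phi_{ije}g^{ef}T_f=\sum_{e,f}\Phi_{jie}g^{ef}T_f=T_j*T_i$. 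The identity element is supplied by Lemma \ref{le1}, so the ring structure on $H^*(E,\mathbb{Q})\otimes_{\mathbb{Q}}\mathbb{Q}[[t]]$ follows (noting that the $X$ appearing in the statement is a typo for $E$).

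I do not expect any real obstacle here, since all of the geometric content (the splitting axiom, the linear equivalence of boundary divisors pulled back from $\overline{\mathcal{M}}_{0,4}\cong\mathbb{P}^1$, and the construction of the virtual class) has already been absorbed into Theorem \ref{WDVV theorem}. The only care needed is bookkeeping: one must keep track of which dummy index plays which role so that the WDVV equation is applied with the correct permutation of its four arguments, and one must invoke the three-fold symmetry of $\Phi_{pqr}$ at the appropriate moments. This mirrors the classical argument in \cite[Theorem 8.2.4]{Cox.Katz}, and no new input specific to the fiber bundle setup is required beyond what already went into proving the WDVV equation.
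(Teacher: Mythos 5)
Your proof is correct and follows the same route as the paper's: expand both sides via the definition of $*$, reduce to a bilinear identity in the $\Phi_{pqr}$ and $g^{ef}$, and close it with the WDVV equation together with the $S_3$-symmetry of $\Phi_{pqr}$ (and Lemma \ref{le1} for the identity). The only difference is that you spell out the index bookkeeping that the paper leaves to the reader with ``the WDVV equation immediately gives the equality,'' and you correctly flag the $X$-for-$E$ typo in the statement.
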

\begin{proof}
This is straightforward. We have
$$(T_i *T_j)*T_k\,=\, \sum_{e,f}\sum_{p,q}\Phi_{ije}g^{ef}\Phi_{fkp}g^{pq}T_q,$$
and
$$T_i*(T_j * T_k)\,=\,\sum_{e,f}\sum_{p,q}\Phi_{jke}g^{ef}\Phi_{ifp}g^{pq}T_q.$$
Now the WDVV equation immediately gives us the equality. The ring structure is now a straightforward consequence.
\end{proof}

\begin{rem}
The WDVV equation is, in fact, equivalent to the associativity of $*$ product. Since for each $l$, there exists $q$ 
such that $g^{lq}\,\neq\, 0$, comparing the coefficients of $T_q$ in $(T_i *T_j)*T_k$ and $T_i*(T_j * T_k)$, the WDVV 
equation can be recovered.
\end{rem}

\section{Rational plane curves in a moving family of planes}\label{rational planar curve section}

\subsection{A new proof of Theorem \ref{thm_B}}

Recall that a curve in $\mathbb{P}^3$ is said to be planar when it lies inside a hyperplane in $\mathbb{P}^{3}$. We now 
describe the space of planar rational stable maps in $\mathbb{P}^3$ of a given degree $d\,\in\, \mathbb{Z}_{\geq 0}$. 
The hyperplanes in $\mathbb{P}^3$ are parametrized by the Grassmannian 
$\mathbb{G}\,:=\, G(3,4)$ of $3$-planes in $\mathbb{C}^4$. Then $E \,\xrightarrow{\,\,\,\pi\,\,\,}\,
\mathbb{G}$ --- the projectivization of the universal subbundle on $\mathbb{G}$ --- is the universal hyperplane. It is a 
$\mathbb{P}^2$-fiber bundle; it's structure group is $\mathrm{Aut}(\mathbb{P}^2)$. Note
that the group $G$ described in \eqref{the structure 
group_1} is $\mathrm{Aut}(\mathbb{P}^2)$ itself. Thus the class $d \,\in\, \mathbb{Z}\,\cong\, H_2(\mathbb{P}^2,
\, {\mathbb Z})$ has a corresponding family $\widetilde{d}\,
\in\, \Gamma(\mathbb{G},\, R_2\pi_*\underline{\mathbb{Z}})$, and we can consider the moduli space 
$\overline{\mathcal{M}}^{\mathrm{Fib}}_{0,n}(E/\mathbb{G},\widetilde{d})\,=\,\overline{\mathcal{M}}_{0,n}(E/\mathbb{G},d)$. 
This moduli space is the space of stable maps of planar rational curves of degree $d$ in $\mathbb{P}^3$.

\subsection*{Cohomology of $E$}

Our goal of this section is to answer Question \ref{genus-q-count-in-plane}. 
To do so, we will use the WDVV equation and deduce a recursion relation in Theorem \ref{thm_B}. 
First the cohomology ring of $E\,\xrightarrow{\,\,\,\pi\,\,\,}\, \mathbb{G}$ will be computed.

Using Leray-Hirsch theorem \cite[Proposition 9.10]{Eisenbud-Harris}, the cohomology ring is
\begin{equation*}
H^*(E, \,\mathbb{Q})\, =\, \dfrac{\mathbb{Q}[a,H]}{\left( a^4, H^3 - a H^2 + a^2 H -a^3 \right)},
\end{equation*}  
where $a$ is the hyperplane class in $\mathbb{G}$, and $H$ is the pullback of the hyperplane class
in $\mathbb{P}^3$ via the composition of maps $E\,\hookrightarrow\, \mathbb{G}\times\mathbb{P}^3
\,\longrightarrow\, \mathbb{P}^3$. Clearly,
\begin{equation}\label{x1}
\{a^iH^j\,\big\vert\,\, 0 \,\leq\, i \,\leq\, 3,\ 0 \,\leq\, j \,\leq\, 2\}
\end{equation}
is a cohomology basis of $H^*(E,\, \mathbb{Q})$. Denote $a^iH^j$ by $T_{ij}$, for convenience. 

One can compute the dual basis $\{T^{ij}\}$ with respect to the Poincar\'e pairing:
$$
T^{ij}\ =\ T_{3-i,2-j}-T_{4-i,1-j}+T_{5-i,-j}
$$
and setting $T_{ij}\,=\,0$ if $i\,>\,3$ or $j\,<\,0$.
So the diagonal class of $E$ is
\begin{equation}\label{diag}
[\Delta_E] \,=\, \sum_{i=0}^3 \sum_{j=0}^2 T_{ij} \otimes T_{(3-i)(2-j)} -
\sum_{i=1}^3 \sum_{j=0}^1 T_{ij} \otimes T_{(4-i)(1-j)}+ \left( T_{20}\otimes T_{30}\right)+
\left(T_{30}\otimes T_{20}\right). 
\end{equation}

\subsection*{GW potential}

Let $T_{03}\,:=\,T_{12}-T_{21}+T_{30}$ denote the cohomology class $H^3\,\in\, H^6(E)$. Instead of using the
basis in \eqref{x1}, we consider the set $$\{T_{03}\}\cup\{T_{ij}\,\big\vert\,\, 0 \,\leq\, i
\,\leq\, 3,\ 0 \,\leq\, j \,\leq\, 2\}$$ of linearly dependent elements and
the corresponding formal variables $t_{ij}$. Using this, {\em define} the new Gromov-Witten potential
function\footnote{Evaluating at $t_{03}\,=\,0$ recovers the usual one \eqref{GW potential}.}
\begin{align}
\label{Potential_primary}
\Phi\ =\ \sum_{d \geq 0} \sum_{\sum n_{ij} \geq 3}  \langle T_{00}^{n_{00}}, \ldots , T_{32}^{n_{32}} , T_{03}^{n_{03}}  \rangle^{\mathrm{Fib}}_{0,\sum n_{ij},d} ~~\dfrac{t_{00}^{n_{00}}}{n_{00}!} \cdots \dfrac{t_{32}^{n_{32}}}{n_{32}!}\cdot \dfrac{t_{03}^{n_{03}}}{n_{03}!}.
\end{align} 
The potential $\Phi$ can be written as $\Phi\,=\, \Phi_{d=0}+ \Phi_{d>0}$\footnote{The $d\,=\,0$ and $d\,>\,0$
parts are usually called the `classical' and `quantum' parts.}, namely the sum of $d\,=\,0$ and
$d\,>\,0$ parts. Using the string and point mapping axioms in Properties \ref{sa}, \ref{pma} respectively, we can write
\begin{align*}
\Phi_{d=0}\,=\,&
\sum_{\sum n_{ij} =3}  \left( \int_E T_{00}^{n_{00}}  \cdots  T_{32}^{n_{32}}T_{03}^{n_{03}} \right) \dfrac{t_{00}^{n_{00}}}{n_{00}!} \cdots \dfrac{t_{32}^{n_{32}}}{n_{32}!}\dfrac{t_{03}^{n_{03}}}{n_{03}!}, \quad \text{ and }\\
\Phi_{d>0}\,=\,&
\sum_{d > 0}\sum_{\substack{\sum n_{ij} \geq3\\(i, j) \neq (i, 0)}}  \langle T_{01}^{n_{01}}, \ldots , T_{32}^{n_{32}} , T_{03}^{n_{03}}  \rangle^{\mathrm{Fib}}_{0,\sum n_{ij},d} ~ \dfrac{t_{01}^{n_{01}}}{n_{01}!} \cdots \dfrac{t_{32}^{n_{32}}}{n_{32}!}\cdot \dfrac{t_{03}^{n_{03}}}{n_{03}!}.
\end{align*} 

\subsection*{WDVV equation}
Similar to how the WDVV equation in Theorem \ref{WDVV theorem} is derived from the splitting axiom in
Property \ref{Sa}, we obtain the new WDVV equation with the potential $\Phi$ in \eqref{Potential_primary}:
\begin{align}
\label{WDVV}
\sum_{\substack{0\leq i,k \leq 3 \\ 0 \leq j,l\leq 2}} \dfrac{\partial^3 \Phi}{\partial t_{01}
\partial t_{01}\partial t_{ij}}g^{(ij)(k l)} \dfrac{\partial^3 \Phi}{\partial t_{kl}\partial t_{02}\partial t_{02}}
\,=\, \sum_{\substack{0\leq i,k \leq 3 \\ 0 \leq j,l\leq 2}} \dfrac{\partial^3 \Phi}{\partial t_{01}\partial t_{02}\partial t_{ij}}g^{(ij)(k l)} \dfrac{\partial^3 \Phi}{\partial t_{k l}\partial t_{01}\partial t_{02}}.
\end{align}
We observe from the diagonal class \eqref{diag} that the terms in \eqref{WDVV} survive only when $(i,\,j)
\,=\,(3-k,\,2-l)$ or $(4-k,\,1-l)$ or $(5-k,\,-l)$. With $\Phi_{d>0}$, a lot of cancellations happen because it does not have $t_{i0}$ 
variables; it only survives when $i+k\,=\,3$ and $j\,=\,l\,=\,1$.

\subsection*{Strategy of the proof}

Our strategy to prove \Cref{thm_B} is to observe nontrivial relationships extracted from the explicit computation
of the WDVV equation in \eqref{WDVV}. For $\Phi_{d=0}$, we have differentiations
\begin{align*}
&\dfrac{\partial^3 \Phi_{d=0}}{\partial t_{01}\partial t_{01}\partial t_{ij}}=\begin{cases} 1 \text{ if } (i,j)=(3,0),(2,1) \\ 0 \text{ otherwise}\end{cases}\!\!\!,\ \ \dfrac{\partial^3 \Phi_{d=0}}{\partial t_{01}\partial t_{02}\partial t_{ij}}=\begin{cases} 1 \text{ if } (i,j)=(2,0) \\ 0 \text{ otherwise}\end{cases} \\
&\text{and }\ \dfrac{\partial^3 \Phi_{d=0}}{\partial t_{02}\partial t_{02}\partial t_{ij}}=0.
\end{align*}
Putting these computations into \eqref{WDVV}, and using $g^{(ij)(kl)}$ calculated in \eqref{diag}, the WDVV equation \eqref{WDVV} becomes
\begin{align}
\label{WDVV1}
&\dfrac{\partial^3 \Phi_{d>0}}{\partial t_{02}\partial t_{02}\partial t_{02}}
+\sum_{\substack{i+k=3 \\ j=l=1}} \dfrac{\partial^3 \Phi_{d>0}}{\partial t_{01}\partial t_{01}\partial t_{ij}} \dfrac{\partial^3 \Phi_{d>0}}{\partial t_{kl}\partial t_{02}\partial t_{02}}\\
&=2\left( \dfrac{\partial^3 \Phi_{d>0}}{\partial t_{01}\partial t_{02}\partial t_{12}} - \dfrac{\partial^3 \Phi_{d>0}}{\partial t_{01}\partial t_{02}\partial t_{21}}\right) + \sum_{\substack{i+k=3 \\ j=l=1}} \dfrac{\partial^3 \Phi_{d>0}}{\partial t_{01}\partial t_{02}\partial t_{ij}} \dfrac{\partial^3 \Phi_{d>0}}{\partial t_{k l}\partial t_{01}\partial t_{02}}. \nonumber
\end{align}
Then we prove \Cref{thm_B} by reading off the coefficient of $e^{dt_{01}}\frac{t^{r}_{02}}{r!}
\frac{t^{s}_{03}}{s!}u^{\theta}$ in the above equation \eqref{WDVV1} after evaluating $t_{ij}\,
\longmapsto\, u^it_{0j}$. Here $u$ is a formal variable.

\subsection*{Left-hand side of \eqref{WDVV1}}
The first term of the left-hand side is
\begin{align}\label{020202}
\dfrac{\partial^3 \Phi_{d>0}}{\partial t_{02}\partial t_{02}\partial t_{02}}=&
\sum_{d > 0} \sum_{\substack{\sum n_{ij} \geq 3\\(i, j) \neq (i, 0)}} \langle T_{01}^{n_{01}}, \ldots , T_{32}^{n_{32}} , T_{03}^{n_{03}}  \rangle^{\mathrm{Fib}}_{0,\sum n_{ij},d} ~ \dfrac{t_{01}^{n_{01}}}{n_{01}!} \cdots \dfrac{t_{02}^{n_{02}-3}}{(n_{02}-3)!}\cdots \dfrac{t_{32}^{n_{32}}}{n_{32}!}\cdot \dfrac{t_{03}^{n_{03}}}{n_{03}!}.
\end{align} 
Recall that the coefficients of the right-hand side of \eqref{020202} are fiberwise GW invariants
\begin{align*}
\langle a^{\theta_1}H^{m_1}, a^{\theta_2}H^{m_2}, \ldots ,a^{\theta_n}H^{m_n}  \rangle^{\mathrm{Fib}}_{0,n,d}\ =\  \int_{[\overline{\mathcal{M}}_{0,n}(E/\mathbb{G},d)]^{\mathrm{vir}}} \widetilde{\pi}^*(a^{\sum_i \theta_i})  \prod_{i=1}^{n} \ev_i^*(H^{m_i}).
\end{align*}
Applying the divisor axiom in Property \ref{da}, it becomes $d^kN_d(r,s,\sum_i \theta_i)$, where $k$ is the
number of $1$'s among $m_1,\, \cdots,\, m_n$, while $r$, $s$ are the number of $2$'s and $3$'s respectively. Putting
these into \eqref{020202}, and evaluating $t_{ij}\, \longmapsto, u^it_{0j}$, we obtain
\begin{equation}\label{222}
\dfrac{\partial^3 \Phi_{d>0}}{\partial t_{02}\partial t_{02}\partial t_{02}}\ =\ 
\sum_{d > 0}\sum_{r,s,\theta}  N_d(r,s,\theta) ~ e^{dt_{01}}  \dfrac{t_{02}^{r-3}}{(r-3)!} \dfrac{t_{03}^{s}}{s!} u^{\theta}.
\end{equation} 
Similarly, we have
\begin{align}\label{1122}
&\sum_{\substack{i+k=3 \\ j=l=1}} \dfrac{\partial^3 \Phi_{d>0}}{\partial t_{01}\partial t_{01}\partial t_{ij}} \dfrac{\partial^3 \Phi_{d>0}}{\partial t_{kl}\partial t_{02}\partial t_{02}}\\
&=
\sum_{d_1,d_2 > 0}\sum_{\substack{r_1,s_1,\theta_1 \\ r_2,s_2,\theta_2}}  d_1^3d_2 N_{d_1}(r_1,s_1,\theta_1) N_{d_2}(r_2,s_2,\theta_2) ~ e^{(d_1+d_2)t_{01}} \dfrac{t_{02}^{r_1+r_2-2}}{r_1!(r_2-2)!} \dfrac{t_{03}^{s_1+s_2}}{s_1!s_2!} u^{\theta_1+\theta_2-3}. \nonumber
\end{align} 

\subsection*{Right-hand side of \eqref{WDVV1}}
We keep doing the same computation to get
\begin{align}\label{1212-1221}
&\dfrac{\partial^3 \Phi_{d>0}}{\partial t_{01}\partial t_{02}\partial t_{12}} - \dfrac{\partial^3 \Phi_{d>0}}{\partial t_{01}\partial t_{02}\partial t_{21}}\\
&=
\sum_{d > 0}\sum_{r,s,\theta}  dN_d(r,s,\theta) ~ e^{dt_{01}}  \dfrac{t_{02}^{r-2}}{(r-2)!} \dfrac{t_{03}^{s}}{s!} u^{\theta-1}
-\sum_{d > 0}\sum_{r,s,\theta}  d^2N_d(r,s,\theta) ~ e^{dt_{01}}  \dfrac{t_{02}^{r-1}}{(r-1)!} \dfrac{t_{03}^{s}}{s!} u^{\theta-2}. \nonumber
\end{align} 
Also we have
\begin{align}\label{1212}
&\sum_{\substack{i+k=3 \\ j=l=1}} \dfrac{\partial^3 \Phi_{d>0}}{\partial t_{01}\partial t_{02}\partial t_{ij}} \dfrac{\partial^3 \Phi_{d>0}}{\partial t_{kl}\partial t_{01}\partial t_{02}}\\
&=
\sum_{d_1,d_2 > 0} \sum_{\substack{r_1,s_1,\theta_1 \\ r_2,s_2,\theta_2}} d_1^2d_2^2 N_{d_1}(r_1,s_1,\theta_1) N_{d_2}(r_2,s_2,\theta_2) ~ e^{(d_1+d_2)t_{01}} \dfrac{t_{02}^{r_1+r_2-2}}{(r_1-1)!(r_2-1)!} \dfrac{t_{03}^{s_1+s_2}}{s_1!s_2!} u^{\theta_1+\theta_2-3}. \nonumber
\end{align} 

\subsection*{Recursive formula} Now we would like to read off the coefficients of
$e^{dt_{01}}\frac{t_{02}^{r-3}}{(r-3)!}\frac{t_{03}^s}{s!}u^{\theta}$ from \eqref{222},
\eqref{1122}, \eqref{1212-1221}, \eqref{1212}. To do so, the condition $r \, \geq \, 3$ is necessary. Moreover, if $r \, \geq \, 3$, the vanishing of $N_1(r,s,\theta) $ follows from the dimensional reason. Thus we will compute the aforementioned coefficient when $r \, \geq \, 3$ and $d \ \geq \ 2$.  In \eqref{222} and \eqref{1212-1221}, they
are $N_d(r,s,\theta)$ and $dN_{d}(r-1,s,\theta+1)-d^2N_d(r-2,s,\theta+2)$ respectively. 
In \eqref{1122} and \eqref{1212}, we have
\begin{align*}
&\sum_{\substack{d_1,d_2 > 0\\ d_1+d_2=d}}\sum_{\substack{r_1+r_2=r-1, r_2\geq 2\\ s_1+s_2=s \\ \theta_1+\theta_2=\theta+3}}  {r-3 \choose r_1}{s \choose s_1} d_1^3d_2 N_{d_1}(r_1,s_1,\theta_1) N_{d_2}(r_2,s_2,\theta_2), \\
&\sum_{\substack{d_1,d_2 > 0\\ d_1+d_2=d}}\sum_{\substack{r_1+r_2=r-1, r_1,r_2\geq 1\\ s_1+s_2=s \\ \theta_1+\theta_2=\theta+3}} {r-3 \choose r_1-1}{s \choose s_1} d_1^2d_2^2 N_{d_1}(r_1,s_1,\theta_1) N_{d_2}(r_2,s_2,\theta_2),
\end{align*}
respectively. Putting all these into \eqref{WDVV1}, we obtain the following theorem:

\begin{thm}
Let $N_d(r,s,\theta)$ be the fiberwise GW invariant defined in \eqref{ndrst}.
Then it satisfies the following recursion relation for $d \,\geq\, 2$ and $r\,\geq\, 3$\footnote{Using
the equality $H^3\,=\,aH^2-a^2H+a^3$ and the divisor axiom in Property \ref{da}, we can check that the relation
is the same one as in \cite[Theorem 3.3]{BMS}.}:
\begin{align*}
&N_d(r,s,\theta)=  2dN_d(r-1,s,\theta+1)-2d^2N_d(r-2,s,\theta+2) \\
&+ \sum_{\substack{d_1,d_2 > 0\\ d_1+d_2=d}}\sum_{\substack{r_1+r_2=r-1\\ s_1+s_2=s \\ \theta_1+\theta_2=\theta+3}} \left( d_1^2d_2^2\binom{r-3}{r_1-1}- d_1^3d_2{r-3\choose r_1}\right) \binom{s}{s_1} N_{d_1}(r_1,s_1,\theta_1)N_{d_2}(r_2,s_2,\theta_2).\nonumber
\end{align*}
The convention here is: ${a \choose b}\,=\,0$ if $b\,<\,0$ or $b\,>\,a$.
\end{thm}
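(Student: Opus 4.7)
The plan is to derive the recursion as a direct coefficient extraction from the WDVV equation \eqref{WDVV1}, using the expansions of the third derivatives already computed in \eqref{222}, \eqref{1122}, \eqref{1212-1221} and \eqref{1212}. More precisely, I will substitute those four formal power series expansions (which incorporate the divisor axiom conversion of the mixed GW invariants into $d^{k}N_d(r,s,\theta)$ together with the variable rescaling $t_{ij}\mapsto u^{i}t_{0j}$) into both sides of \eqref{WDVV1}, and then identify, on each side, the coefficient of the single monomial
\[
e^{dt_{01}}\,\frac{t_{02}^{\,r-3}}{(r-3)!}\,\frac{t_{03}^{\,s}}{s!}\,u^{\theta}.
\]

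First I would collect the left-hand side. The expansion \eqref{222} contributes exactly $N_d(r,s,\theta)$ to this coefficient. The product term \eqref{1122} contributes a convolution, and reading off the monomial forces $d_1+d_2=d$, $s_1+s_2=s$, $\theta_1+\theta_2=\theta+3$, and $r_1+r_2=r-1$ (the shift by $2$ in the $t_{02}$-denominator combined with the index shift $r-3$ yields exactly the binomial $\binom{r-3}{r_1}$ after rewriting $\frac{1}{r_1!(r_2-2)!}$ as $\frac{1}{(r-3)!}\binom{r-3}{r_1}$); the resulting term is the $d_1^3 d_2$ sum in the statement, with the factor $\binom{s}{s_1}$ coming from $\frac{1}{s_1!s_2!}=\frac{1}{s!}\binom{s}{s_1}$. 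The constraint $r_2\geq 2$ appearing in the unsimplified sum is absorbed by the convention $\binom{r-3}{r_1}=0$ if $r_1>r-3$.

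Next I would do the same for the right-hand side. From \eqref{1212-1221} the coefficient is $2dN_d(r-1,s,\theta+1)-2d^2 N_d(r-2,s,\theta+2)$ (the factor $2$ arising from the coefficient in \eqref{WDVV1}). From \eqref{1212} a similar convolution appears, this time producing $d_1^2 d_2^2$ and, after rewriting $\frac{1}{(r_1-1)!(r_2-1)!}=\frac{1}{(r-3)!}\binom{r-3}{r_1-1}$, the binomial $\binom{r-3}{r_1-1}$; again the range $r_1,r_2\geq 1$ is absorbed by the convention. Equating the two coefficients and solving for $N_d(r,s,\theta)$ yields the claimed recursion.

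The only real obstacle is bookkeeping: making sure the three index shifts (the $-3$ in $t_{02}^{r-3}$, the $+3$ in $\theta_1+\theta_2=\theta+3$ coming from the three differentiations, and the $-1$ in $r_1+r_2=r-1$ from the single $t_{02}$-differentiation on each factor) are applied consistently, and that the two binomial conventions handle the boundary cases where one of $r_1, r_2$ would otherwise be out of range. Once these rewritings are done carefully, the recursion drops out with no further input, and the stated relation follows.
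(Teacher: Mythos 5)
Your proposal follows the paper's proof exactly: substitute the expansions \eqref{222}, \eqref{1122}, \eqref{1212-1221}, \eqref{1212} into the WDVV relation \eqref{WDVV1} and extract the coefficient of $e^{dt_{01}}\frac{t_{02}^{r-3}}{(r-3)!}\frac{t_{03}^{s}}{s!}u^{\theta}$, rewriting $\frac{1}{r_1!(r_2-2)!}$ and $\frac{1}{(r_1-1)!(r_2-1)!}$ in terms of $\binom{r-3}{r_1}$ and $\binom{r-3}{r_1-1}$ and absorbing the boundary constraints via the binomial convention. This is correct and is the same route the paper takes; no meaningful divergence.
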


The initial values of the recursion are given by 
\begin{align*}
		N_d(r,s,\theta) & \,= \begin{cases}
		1 &  \mbox{if} ~~(d,r,s,\theta) =(1,0,2,1),(1,2,1,1), (1,1,1,2),(1,2,0,3),(2,2,3,0)\\ 
		0 & \mbox{\textnormal{otherwise with $d = 1$ and $r \geq 3$}}\\
		0 & \mbox{\textnormal{otherwise with $d\geq 1$ and $r\leq 2$}}.\end{cases} 
\end{align*}
For $1\ \leq \ d \ \leq \ 2$ and $r \ \leq \ 2$, the initial values are computed in \cite[Lemmas 3.1 and 3.2]{BMS}. The computation of the remaining initial values are straightforward. Indeed, for $r \ \geq \ 3$, $N_1(r,s,\theta)$ vanishes due to the dimensional reason. Also, using the equality $H^3\,=\,aH^2-a^2H+a^3$, we can check that
$N_d(r,s,\theta)\,=\,0$ if $s+\theta\,\geq\, 4$. In particular, we have $N_d(r,s,\theta)\,=\,0$ if
$d\,\geq\, 3$ and $r\,\leq\, 2$.

\subsection{Enumerative significance}\label{enumerative significance} As we have noted earlier, the fiberwise GW invariants for an arbitrary fiber bundle
need not capture the actual counting even if the fiber is a homogeneous projective variety.
However, in our case, the invariants $N_d(r,s,\theta)$ considered above are all actual countings. We would like to explain it. 

Denote by $\tau$ the composition of morphism $E\,\hookrightarrow\, \mathbb{G}\times \mathbb{P}^3
\,\xrightarrow{\,\,\mathrm{pr}_2\,\,}\, \mathbb{P}^3$.
The map $\tau$ induces a morphism $\widetilde{\tau}$ between the moduli spaces, and we get the following commutative diagram: 
\begin{equation*}
\begin{gathered}
\xymatrix{
\overline{\mathcal{M}}_{0,n}(E/\mathbb{G},d) \ar[rrd]_{\widetilde{\ev}_i}\ar[d]^{\ev_i}\ar[rr]^{\widetilde{\tau}}& & \overline{\mathcal{M}}_{0,n}(\mathbb{P}^3,d) \ar[d]^{\ev_i^{\prime}}\\
E\ar[rr]_{\tau}&& \mathbb{P}^3
}
\end{gathered}
\end{equation*} 
The key advantage of our case is that $\widetilde{\tau}$ is a locally closed immersion from a sub-locus
in which the stable maps are closed embeddings, and the target space $\mathbb{P}^3$ of the codomain
of $\widetilde{\tau}$ is a homogeneous variety. Note that $\mathbb{P}^3$ is isomorphic to
${\rm GL}_4(\mathbb{C})/P$, where $P$ is a parabolic subgroup of ${\rm GL}_4(\mathbb{C})$
that preserves the line $(0,\,0,\, 0,\, {\mathbb C})\,\subset\,\mathbb{C}^4$.

In \cite[Lemma 14]{FuPa}, Fulton-Pandharipande proved that the intersection of pullback subvarieties of a 
homogeneous variety by the evaluation map in a correct codimension is of dimension $0$, by translating the 
subvarieties if necessary. This result and the map $\widetilde{\tau}$ together give us the following:

\begin{prp}\label{tranverse intersection proposition} \label{prop52}
Let $\Gamma_1,\, \cdots ,\, \Gamma_n$ be pure dimensional subvarieties of $\mathbb{P}^3$, while
$\gamma_1,\, \cdots ,\,\gamma_n$ are corresponding cycles in $H^*(\mathbb{P}^3,\, \mathbb{Q})$, such that 
$$
\sum_{i=1}^n \mathrm{codim}\;(\Gamma_i\subset \mathbb{P}^3)\ =\ 3d+2+n.
$$ 
Suppose $d\,\geq\, 2$. Then for general points $h_1,\, \cdots ,\,h_n$ of ${\rm GL}_4(\mathbb{C})$, the scheme
theoretic intersection 
\begin{equation*}
\widetilde{\ev}_1^{-1}(h_1 \Gamma_1) \cap \ldots \cap \widetilde{\ev}_n^{-1}(h_n \Gamma_n)
\end{equation*}
 is a finite number of reduced points in the automorphism free part of $\overline{\mathcal{M}}_{0,n}(E/\mathbb{G},d)$, and 
$$
\langle \tau^*(\gamma_1),\, \ldots,\, \tau^*(\gamma_{n})\rangle^{\mathrm{Fib}}_{ 0,n,d}
\ =\ \#\widetilde{\ev}_1^{-1}(h_1 \Gamma_1) \cap \ldots \cap \widetilde{\ev}_n^{-1}(h_n \Gamma_n).
$$
\end{prp}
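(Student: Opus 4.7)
The plan is to deduce \Cref{prop52} from a Kleiman--Bertini transversality argument in the form of \cite[Lemma 14]{FuPa}. The key observation, already flagged in the surrounding text, is that although $E$ itself admits only the small structure group $\mathrm{Aut}(\mathbb{P}^{2})$, the composite $\widetilde{\ev}_{i}=\tau\circ\ev_{i}$ lands in the homogeneous variety $\mathbb{P}^{3}=\mathrm{GL}_{4}(\mathbb{C})/P$. The natural $\mathrm{GL}_{4}(\mathbb{C})$-action on $\mathbb{G}$ lifts to $E=\mathbb{P}(\gamma)$ and therefore to $\overline{\mathcal{M}}_{0,n}(E/\mathbb{G},d)$, and under these lifts each $\widetilde{\ev}_{i}$ is equivariant. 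I would apply \cite[Lemma 14]{FuPa} to the product morphism
\[
(\widetilde{\ev}_{1},\ldots,\widetilde{\ev}_{n}) \,\colon\, \overline{\mathcal{M}}_{0,n}(E/\mathbb{G},d) \,\longrightarrow\, (\mathbb{P}^{3})^{n}
\]
under the diagonal $\mathrm{GL}_{4}(\mathbb{C})^{n}$-action on the target.

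The dimension bookkeeping works out: by \Cref{properties of coarse moduli space}(i) the automorphism-free open sublocus is smooth and pure of dimension $3d+2+n=\sum_{i}\mathrm{codim}(\Gamma_{i}\subset\mathbb{P}^{3})$, so the expected dimension of the intersection is zero. Kleiman--Bertini then yields, for generic $h_{i}$, a finite set of reduced transverse intersection points on this smooth locus. A small point to verify is that the intersection does not escape into the complement of $\overline{\mathcal{M}}_{0,n}^{*}(E/\mathbb{G},d)$. That complement is a proper closed substack of strictly smaller dimension (for $d\geq 2$ the multiple-cover locus and the boundary strata with extra symmetry are of positive codimension), so a second application of \cite[Lemma 14]{FuPa}---with this closed sub-stack as source---guarantees emptiness after a further generic translation. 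This handles both the \emph{finite reduced} and the \emph{automorphism-free} assertions of the proposition.

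For the final identification with the GW invariant, I would restrict to the smooth open $\overline{\mathcal{M}}_{0,n}^{*}(E/\mathbb{G},d)$ where the obstruction sheaf vanishes---its fiberwise rank equals the virtual dimension minus the actual dimension, which is zero there by the smoothness cited above---so the perfect obstruction theory constructed in \Cref{family version GW invariants} becomes a two-term complex concentrated in degree zero and the virtual class agrees with the ordinary fundamental class on this open. The integral $\int_{[\overline{\mathcal{M}}_{0,n}(E/\mathbb{G},d)]^{\mathrm{vir}}}\prod_{i}\widetilde{\ev}_{i}^{*}\gamma_{i}$ then reduces, via the projection formula and the identification $\widetilde{\ev}_{i}^{*}\gamma_{i}=\ev_{i}^{*}\tau^{*}\gamma_{i}$, to the degree of the reduced $0$-cycle $\bigcap_{i}\widetilde{\ev}_{i}^{-1}(h_{i}\Gamma_{i})$. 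The main obstacle I expect is making this last step airtight: one must carefully argue that the virtual class, which a priori is a cycle on the whole moduli stack, restricts to the ordinary fundamental class on the smooth automorphism-free open where the intersection is supported, and that the positive-codimension excluded locus contributes nothing. Everything else follows from standard transversality once the $\mathrm{GL}_{4}(\mathbb{C})$-equivariance of $\widetilde{\ev}$ is in place.
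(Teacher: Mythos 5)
Your route is essentially the paper's. Both proofs ultimately reduce to Kleiman transversality for a $\mathrm{GL}_4(\mathbb{C})$-equivariant evaluation into the homogeneous target $\mathbb{P}^3$; the paper packages this by pushing forward through the locally closed immersion $\widetilde{\tau}\colon\overline{\mathcal{M}}_{0,n}(E/\mathbb{G},d)\to\overline{\mathcal{M}}_{0,n}(\mathbb{P}^3,d)$, invoking \cite[Lemma 14]{FuPa} directly on the $\mathbb{P}^3$ side, and then observing that $\widetilde{\tau}$ restricted to $\mathcal{M}^*_{0,n}(E/\mathbb{G},d)$ is a closed embedding for $d\geq 2$ so the codimension $3d+2+n$ is preserved under pullback, which equals the dimension of $\mathcal{M}^*_{0,n}(E/\mathbb{G},d)$.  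Your more direct formulation in terms of equivariance of $\widetilde{\ev}_i$ is a legitimate repackaging of the same mechanism.

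Where your write-up leaves a genuine hole is exactly the place you flagged at the end: you do not actually justify that the virtual class contributes nothing from outside the smooth automorphism-free open, and the argument ``the obstruction sheaf has rank zero there because the locus is smooth of expected dimension'' is circular --- smoothness in this setting \emph{follows from} vanishing obstruction, not the other way around --- and in any case it only concerns the restriction to an open, not the components of $[\overline{\mathcal{M}}_{0,n}(E/\mathbb{G},d)]^{\mathrm{vir}}$ that could a priori be supported on the complement. Pairing a cycle class against cohomology classes is insensitive to where a generic representative of the cohomology class is supported, so ``the transverse intersection avoids the bad locus'' does not by itself kill those contributions. The paper resolves this cleanly and globally: since $\mathbb{P}^2$ is convex, $\overline{\mathcal{M}}_{0,n}(\mathbb{P}^2,d)$ is a smooth DM stack of the expected dimension, so the fiber bundle $\overline{\mathcal{M}}_{0,n}(E/\mathbb{G},d)\to\mathbb{G}$ with this fiber is itself a smooth DM stack and $[\overline{\mathcal{M}}_{0,n}(E/\mathbb{G},d)]^{\mathrm{vir}}=[\overline{\mathcal{M}}_{0,n}(E/\mathbb{G},d)]$ on the nose (this is what the proof means by ``the fact that the fiber of $E$ is $\mathbb{P}^2$''). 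With that identification in hand, the remaining chain of equalities is just the Cartesian-diagram/graph-morphism computation you sketch. You should replace your local unobstructedness argument with this global convexity observation; without it, the final step does not close.
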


\begin{proof}
Denote the moduli space of stable maps in $\overline{\mathcal{M}}_{0,n}(E/\mathbb{G},d) $ with non-singular domains by $\mathcal{M}_{0,n}(E/\mathbb{G},d) $, and denote the intersection by
$$
\mathcal{M}^{*}_{0,n}(E/\mathbb{G},d)\ :=\ \mathcal{M}_{0,n}(E/\mathbb{G},d) \cap \overline{\mathcal{M}}^{*}_{0,n}(E/\mathbb{G},d),  
$$ 
where the substack $\overline{\mathcal{M}}^{*}_{0,n}(E/\mathbb{G},d)$ of objects with trivial automorphisms
is defined as in \Cref{properties of coarse moduli space}. The arguments in the proof of \cite[Lemma 13]{FuPa} can
be used in proving that $\mathcal{M}^{*}_{0,n}(E/\mathbb{G},d)$ is a dense open subspace in $\overline{\mathcal{M}}_{0,n}(E/\mathbb{G},d)$.

The main idea of the proof is to use $\mathcal{M}^{*}_{0,n}(E/\mathbb{G},d)$ instead of $\mathcal{M}^{*}_{0,n}(\mathbb{P}^3,d)$ in \cite[Lemma 14]{FuPa}, which is the inverse image of the morphism $\widetilde{\tau}$. Using Kleiman's transversality theorem (cf. \cite[Theorem 1.7]{Eisenbud-Harris}), Fulton-Pandharipande \cite[Lemma 14]{FuPa} shows that the scheme theoretic intersection 
$$
(\ev^{\prime}_1)^{-1}(h_1 \Gamma_1) \cap \ldots \cap (\ev^{\prime}_n)^{-1}(h_n \Gamma_n)
$$
for general points $h_1,\, \cdots ,\, h_n$ of ${\rm GL}_4(\mathbb{C})$ is transverse and supported
on $\mathcal{M}^*_{0,n}(\mathbb{P}^3,d)$ with the codimension being $3d+2+n$. Since the inverse image
$\widetilde{\tau}^{-1}(\mathcal{M}^*_{0,n}(\mathbb{P}^3,d))$ of the smooth locus
$\mathcal{M}^*_{0,n}(\mathbb{P}^3,d)$ is $\mathcal{M}^{*}_{0,n}(E/\mathbb{G},d)$, which is again
smooth, the scheme theoretic intersection 
$$
\widetilde{\tau}^{-1}\circ (\ev^{\prime}_1)^{-1}(h_1 \Gamma_1) \cap \ldots \cap \widetilde{\tau}^{-1}\circ (\ev^{\prime}_n)^{-1}(h_n \Gamma_n)
$$
is transverse and supported on $\mathcal{M}^*_{0,n}(E/\mathbb{G},d)$. When $d\,\neq\, 0,\,1$,
then $\widetilde{\tau}$ on $\mathcal{M}^{*}_{0,n}(E/\mathbb{G},d)$ is a closed embedding. Hence the codimension $3d+2+n$ remains the same, which is the dimension of $\mathcal{M}^*_{0,n}(E/\mathbb{G},d)$. This proves the first part of the proposition.

To prove the remaining part, consider the following Cartesian diagram
\begin{equation*}
\begin{gathered}
\xymatrix{
\cap_{i=1}^n \widetilde{\ev}_i^{-1}(h_i \Gamma_i) \ar[rr]\ar[d] && M\times \prod_{i=1}^n h_i \Gamma_i\ar[d]\\
M \ar[rr]_{\iota} && M\times \left( \mathbb{P}^3 \right)^n
}
\end{gathered}
\end{equation*}
where $M$ denotes the moduli space $\overline{\mathcal{M}}_{0,n}(E/\mathbb{G},d)$ and $\iota$ is the graph
of the morphism $(\widetilde{\ev}_1,\, \cdots ,\,\widetilde{\ev}_n)$. Now, we have
\begin{align*}
\langle \tau^*(\gamma_1), \ldots, \tau^*(\gamma_{n})\rangle^{\mathrm{Fib}}_{ 0,n,\widetilde{d}}
&= \int_{[M]^{\mathrm{vir}}} \prod_{i=1}^n \widetilde{\ev}_i^* ([h_i\Gamma_i])\\
&= \int_{[M]} \prod_{i=1}^n \widetilde{\ev}_i^* ([h_i\Gamma_i])\\ 
&= \deg \left(\iota^*[ M\times \prod_{i=1}^n h_i \Gamma_i]\right)\\
& =\deg \left([\cap_{i=1}^n \widetilde{\ev}_i^{-1}(h_i \Gamma_i)]\right)\\
&= \#\widetilde{\ev}_1^{-1}(h_1 \Gamma_1) \cap \ldots \cap \widetilde{\ev}_n^{-1}(h_n \Gamma_n).
\end{align*}
Here, the first equality is definition of fiberwise GW invariant, while the second equality follows from
$[M]^{\mathrm{vir}}\,=\,[M]$ when $g\,=\,0$ and the fact that the fiber of $E$ is $\mathbb{P}^2$;
the third equality is by definition of the graph morphism $\iota$, while the fourth equality comes from the
transversality of the intersection, and the fifth equality is the definition of the degree of a zero-dimensional scheme.
\end{proof}

\begin{cor}
For $d \,\geq\, 2$, the number $N_d(r,s,\theta)$ is enumerative.
\end{cor}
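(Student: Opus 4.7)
The corollary should follow as a straightforward extension of Proposition \ref{prop52}. The only new feature of $N_d(r,s,\theta)$ beyond the setting of that proposition is the factor $a^\theta$ at the first marked point: this is a class pulled back from the base $\mathbb{G}$ rather than from $\mathbb{P}^3$ via $\tau$, so it is not of the form $\tau^*\gamma$ handled directly in Proposition \ref{prop52}.

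First I would rewrite $N_d(r,s,\theta)$ so as to isolate this base contribution. Since $H = \tau^* h$ for $h \in H^2(\mathbb{P}^3)$ the hyperplane class, we have $a^\theta H^2 = \pi^*(a^\theta) \cdot \tau^*(h^2)$, and by the remark following Property \ref{sa}, $\ev_1^*\pi^*(a^\theta) = \widetilde{\pi}^*(a^\theta)$, where $\widetilde{\pi}: M \to \mathbb{G}$ is the structural projection of $M := \overline{\mathcal{M}}_{0,r+s}(E/\mathbb{G}, d)$. Setting $\widetilde{\ev}_i := \tau \circ \ev_i$ and recalling that in genus zero with fiber $\mathbb{P}^2$ one has $[M]^{\mathrm{vir}} = [M]$, this yields
\[
N_d(r,s,\theta) = \int_{[M]} \widetilde{\pi}^*(a^\theta) \cdot \prod_{i=1}^{r} \widetilde{\ev}_i^*(h^2) \cdot \prod_{j=r+1}^{r+s} \widetilde{\ev}_j^*(h^3).
\]

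The heart of the argument is then to repeat the transversality step of Proposition \ref{prop52} applied to the joint morphism
\[
\Phi := (\widetilde\pi, \widetilde{\ev}_1, \ldots, \widetilde{\ev}_{r+s}) : M \longrightarrow \mathbb{G} \times (\mathbb{P}^3)^{r+s},
\]
whose target is homogeneous under $\mathrm{GL}_4(\mathbb{C})^{r+s+1}$, with the first factor acting on $\mathbb{G}$ via its induced action on planes in $\mathbb{P}^3$. The class $a^\theta$ is Poincaré dual to a Schubert subvariety $\Lambda \subset \mathbb{G}$ of codimension $\theta$, namely the locus of planes through $\theta$ fixed generic points of $\mathbb{P}^3$, while $h^2$ and $h^3$ are Poincaré dual to a line and a point. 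Kleiman's transversality applied to $\Phi$ then guarantees that, after translating $\Lambda$ and the $\Gamma_i$ by generic group elements, the scheme-theoretic intersection
\[
\widetilde{\pi}^{-1}(h\Lambda) \cap \bigcap_{i=1}^{r+s} \widetilde{\ev}_i^{-1}(h_i\Gamma_i)
\]
is transverse and supported on the smooth automorphism-free locus $\mathcal{M}^*_{0,r+s}(E/\mathbb{G}, d)$. The hypothesis $d \geq 2$ enters at exactly the same place as in Proposition \ref{prop52}: it ensures that $\widetilde{\tau}$ restricts to a locally closed embedding on this locus, so that the expected codimension matches the actual one and the intersection is zero-dimensional and reduced.

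The remaining step is routine: by the graph-of-$\Phi$ argument used at the end of the proof of Proposition \ref{prop52}, the integral $N_d(r,s,\theta)$ equals the cardinality of the above transverse intersection, and each intersection point corresponds geometrically to a rational degree $d$ planar curve in $\mathbb{P}^3$ whose containing plane passes through the $\theta$ chosen points, and whose image meets the $r$ chosen lines and passes through the $s$ chosen points. I anticipate no real obstacle beyond what already appears in Proposition \ref{prop52}; the only genuinely new ingredient is the simultaneous application of Kleiman's theorem to the base factor $\mathbb{G}$, which is homogeneous under the same group $\mathrm{GL}_4(\mathbb{C})$.
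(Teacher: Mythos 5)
Your argument is correct, but it takes a mildly different route from the paper's. The paper's own proof dispatches the $\theta \geq 1$ case by a reduction: it picks a smooth subvariety $V \subset \mathbb{G}$ with $[V] = a^\theta$ (a linear subspace, since $\mathbb{G}(3,4)$ is a dual $\mathbb{P}^3$), replaces the base $\mathbb{G}$ by $V$, and then invokes Proposition \ref{prop52} over the smaller family $E\big\vert_V \to V$, where $\theta$ has effectively become $0$. You instead re-run the Kleiman transversality argument from scratch, applied to the joint morphism $\Phi = (\widetilde\pi, \widetilde{\ev}_1, \ldots, \widetilde{\ev}_{r+s})$ into $\mathbb{G}\times(\mathbb{P}^3)^{r+s}$, which is homogeneous under $\mathrm{GL}_4(\mathbb{C})^{r+s+1}$. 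Both work. The paper's reduction is shorter and keeps the statement of Proposition \ref{prop52} as a black box, at the cost of implicitly using that $a^\theta$ is represented by a \emph{smooth} subvariety of the base (so the hypotheses of the proposition, which require a non-singular base, still hold after restriction) --- this is automatic here because $\mathbb{G}(3,4)\cong\mathbb{P}^3$ and the representatives are linear. Your approach avoids that smoothness consideration entirely by never shrinking the base, and makes the treatment of the base constraint visibly symmetric with the fiberwise constraints; the trade-off is that you must re-open the proof of Proposition \ref{prop52} to widen the target of Kleiman's theorem rather than citing it directly. Either way, the hypothesis $d\geq 2$ plays the same role (injectivity of $\widetilde\tau$ on the smooth locus).
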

\begin{proof}
Recall the definition, 
$$
N_d(r,s,\theta)\ =\ \int_{[\overline{\mathcal{M}}_{0,r+s}(E/\mathbb{G},d)]^{\mathrm{vir}}} \prod_{i=1}^{r} \ev_i^*(H^2) \cdot \prod_{i=r+1}^{r+s} \ev_i^*(H^3)\cdot \widetilde{\pi}^*(a^{\theta}).
$$ 
When $\theta\,=\,0$, Proposition \ref{prop52} shows that $N_d(r,s,0)$ is enumerative
because $H^2$ and $H^3$ are pullback classes from $\mathbb{P}^3$. When $1 \,\leq\, \theta \,\leq\, 3$, there
exists a subvariety $V$ of the base space $\mathbb{G}$ such that $[V]\,=\,a^{\theta}$. So by replacing
$\mathbb{G}$ by $V$, it reduces to the case where $\theta\,=\,0$ case. Hence $N_d(r,s,\theta)$ is enumerative.
\end{proof}
\begin{rem}
The virtual dimension of the moduli space $\overline{\mathcal{M}}_{0,0}(E/\mathbb{G},1)$ is $5$ whereas the space of lines in $\mathbb{P}^3$ is the same as $G(2,4)$, the Grassmannian of $2$-planes in $\mathbb{C}^4$, which is of $4$ dimensional. Therefore the fiberwise GW invariants $N_1(r,s,\theta)$ are not expected to be enumerative.
\end{rem}

\section*{Acknowledgement}
\noindent We thank Ritwik Mukherjee for a discussion about the initial condition of the recursive formula. N. Das and A. Paul would like to thank him for his advice on pursuing this direction.  J. Oh and A. Paul first met each other at a program ``Vortex Moduli'' at ICTS-Bengaluru (Code: ICTS/Vort2023/02) where we have started collaborating together. A. Paul thanks Martijn Kool for several fruitful discussions. We are very grateful to the referee for helpful comments.

N. Das is supported by the INSPIRE faculty fellowship (Ref No: IFA21-MA 161) funded by the DST, Govt. of 
India. J. Oh is supported by the New Faculty Startup Fund from Seoul National University and the National 
Research Foundation of Korea (NRF) grant funded by the Korean government (MSIT)(RS-2024-00339364). A. Paul 
acknowledges the support from the Department of Atomic Energy, Government of India, under project no. RTI4001.
I. Biswas is partially supported by a J. C. Bose Fellowship (JBR/2023/000003).

\section*{Data availability statement}
No data were used or generated.

\section*{Conflict of Interest}

None of the authors has any conflict of interest to report.

\bibliography{Planar} 
\bibliographystyle{siam}
\end{document}